% !TEX TS-program = pdflatex
% !TEX encoding = UTF-8 Unicode

% This is a simple template for a LaTeX document using the "article" class.
% See "book", "report", "letter" for other types of document.

\documentclass[11pt]{article}
\usepackage{setspace}
\linespread{1}
\usepackage{amsmath}
\usepackage[margin=1in]{geometry}
\geometry{letterpaper}
\usepackage{amssymb}
\usepackage{amsmath}
\usepackage{microtype}
\usepackage{graphicx}
\usepackage{hyperref}
\usepackage{amsthm}
\usepackage{tikz}
\newtheorem{lm}{Lemma}
\newtheorem{thm}[lm]{Theorem}
\newtheorem{defi}[lm]{Definition}
\newtheorem{conj}[lm]{Conjecture}

\newtheorem{cor}[lm]{Corollary}
\newtheorem{eg}[lm]{Example}

\usepackage[font=small,labelfont=bf]{caption}

\title{A Conjectural Brouwer Inequality for Higher-Dimensional Laplacian Spectra}
\author{Rediet Abebe \\ Cornell University \& Harvard University \\ \url{red@cs.cornell.edu}}

\begin{document}

\maketitle
\begin{abstract}
We present a generalization of Brouwer's conjectural family of inequalities -- a popular family of inequalities in spectral graph theory bounding the partial sum of the Laplacian eigenvalues of graphs -- for the case of abstract simplicial complexes of any dimension. We prove that this family of inequalities holds for shifted simplicial complexes, which generalize threshold graphs, and give tighter bounds (linear in the dimension of the complexes) for simplicial trees. We prove that the conjecture holds for the the first, second, and last partial sums for all simplicial complexes, generalizing many known proofs for graphs to the case of simplicial complexes. We also show that the conjecture holds for the $t^{\text{th}}$ partial sum for all simplicial complexes with dimension at least $t$ and matching number greater than $t$. Returning to the special case of graphs, we expand on a known proof to show that the Brouwer's conjecture holds with equality for the $t^{\text{th}}$ partial sum where $t$ is the maximum clique size of the graph minus one (or, equivalently, the number of cone vertices). Along the way, we develop machinery that may give further insights into related long-standing conjectures.
\end{abstract}

\section{Introduction}\label{sec:intro}

Let $G$ be a finite, simple, undirected graph on $n$ vertices $\{v_1, v_2, \ldots, v_n\}$ and with edge set $e(G)$. The degree of a vertex, denoted by $\deg(v_i)$, is the number of edges incident to the vertex. The \emph{degree sequence} of the graph is defined as $d(G) := \left( { \deg(v_1), \deg(v_2), \ldots, \deg(v_n) } \right)$. The \emph{conjugate partition} of the graph is the sequence $d^{T}(G) := \left( { d^{T}_1, d^{T}_2, \ldots } \right)$, where $d^{T}_i$ is the number of vertices with degree at least $i$. Throughout this paper, we assume that $d^T(G)$ is of length $n$. Therefore, $d^T(G)$ will contain at least one $d_i^T$ with value $0$. $G$ can be represented using its Laplacian matrix:

\begin{defi}
The \emph{Laplacian matrix} of $G$, denoted by $L(G)$, is an $n \times n$ matrix where each entry $\ell_{i, j}$ is

\[
\ell_{i, j} := \begin{cases} \deg(v_i) & \text{if }\, i = j \\-1 & \text{if }\, i \neq j, \text{ and } v_i \text{ is adjacent to } v_j, \\0 & \text{otherwise.}\end{cases}
\]

\end{defi}

The rows and columns of the Laplacian matrix are indexed by the vertices of $G$.
Laplacian matrices are known to be positive-semidefinite, and therefore have non-negative, real numbers as eigenvalues \cite{BH}. We denote the eigenvalues of $L(G)$, known as the \emph{Laplacian spectrum}, by
\[ \lambda(G)= \left( \lambda_1(G) \geq \lambda_2 (G) \geq \ldots \geq \lambda_n (G) \right).\]
It is also known that the multiplicity eigenvalue $0$ is equal to the number of connected components of $G$. In particular, there is at least one $0$ eigenvalue, $\lambda_n=0$ .

In each of the above notations, we drop the $G$ when there is no ambiguity and simply use $e, d, d^T, L,$ and $\lambda$ to refer to the edge set, degree sequence, conjugate partition, Laplacian matrix, and Laplacian spectrum of $G$, respectively.

In spectral graph theory, there is an interest in bounding these eigenvalues \cite{BH,Merris,S,Z}. These include well-known results such as that $\lambda_1 \leq n$, $\sum_i \lambda_i = 2e$, and that the largest eigenvalue $\lambda_1$ is at most twice the maximum degree of a vertex in the graph. There are also several long-standing conjectures bounding these eigenvalues. A popular one related to the sum of the Laplacian eigenvalues is Brouwer's conjecture \cite{BH}.

\begin{conj}[Brouwer's Conjecture]
\label{bc}
Let $G$ be a graph with $e$ edges and Laplacian spectrum $\lambda = \left( \lambda_1 \geq \lambda_2 \geq \ldots \lambda_{n-1} \geq \lambda_n \right)$. Then, $\forall t \in \{1, 2, \ldots, n\} =: [n]$,

\[ \sum_{i=1}^{t} \lambda_i \leq e + \binom{t+1}{2}.\]
\end{conj}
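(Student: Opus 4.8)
The plan is to route the eigenvalue sum through the combinatorics of the degree data and to supplement that with direct spectral bounds wherever the combinatorics is too weak. The central tool I would invoke is the Grone--Merris--Bai theorem, which states that the partial sums of the Laplacian spectrum are dominated by those of the conjugate partition: for every $t$, $\sum_{i=1}^{t} \lambda_i \le \sum_{i=1}^{t} d_i^T = \sum_{v} \min(\deg(v), t)$, with equality at $t = n$ since both sides equal $2e$. Whenever one can establish the purely combinatorial inequality $\sum_{v} \min(\deg(v), t) \le e + \binom{t+1}{2}$, Brouwer's bound follows by transitivity and the problem reduces to counting. The complementary tool, for the regimes where this combinatorial bound fails, is the elementary estimate $\lambda_1 \le n$ applied componentwise, together with Cauchy interlacing for principal submatrices of $L$ obtained by deleting vertices.

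The two endpoints and the first partial sum come out directly. For $t = n$ and $t = n-1$, the trace identity $\sum_i \lambda_i = 2e$ with $\lambda_n = 0$ reduces the claim to $e \le \binom{n}{2}$, which is immediate. For $t = 1$ I would avoid the conjugate partition entirely: restricting to the connected component attaining the largest eigenvalue gives $\lambda_1 \le n_c$, and since that component is connected it has $e_c \ge n_c - 1$ edges, so $\lambda_1 \le n_c \le e_c + 1 \le e + 1$. On the structured families the paper targets -- threshold graphs and shifted simplicial complexes -- the Laplacian spectrum is known to coincide exactly with the conjugate of the degree (face) data, so Grone--Merris--Bai holds with equality and the whole problem collapses to verifying the combinatorial inequality by a direct count of faces; the dimension-linear sharpening for simplicial trees should likewise follow from the sparse face structure.

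The main obstacle, and the reason the conjecture is open for intermediate $t$, is that the combinatorial inequality $\sum_{v} \min(\deg(v), t) \le e + \binom{t+1}{2}$ is genuinely false in general: a matching of three edges already gives $d_1^T = 6 > 4 = e + 1$ at $t = 1$, and $K_{2,m}$ gives $\sum_{v} \min(\deg(v), 2) = 2m + 4 > 2m + 3$ at $t = 2$, even though Brouwer's inequality itself holds with room to spare in both cases. Thus Grone--Merris--Bai overshoots, and a general proof must recover the slack between $\sum_{i=1}^{t} \lambda_i$ and $\sum_{i=1}^{t} d_i^T$ that is invisible to the degree sequence alone. The route I would pursue is to read $\binom{t+1}{2}$ as the Laplacian budget of a $t$-clique and to peel off cone vertices one at a time, tracking the spectrum by interlacing, so that the recovered slack accumulates from the vertices whose degree exceeds $t$. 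I expect this peeling to terminate cleanly only under structural hypotheses -- bounded matching number, shiftedness, or tree-like face posets -- which is precisely why I would aim for the conjecture in those regimes and on the first, second, and last partial sums, rather than attempting it in full generality.
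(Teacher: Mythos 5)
The statement you were asked to prove is Brouwer's Conjecture itself, which the paper presents as an open problem (posed in 2008) and never proves in general; the paper only establishes special cases --- $t \in \{1,2,n-1,n\}$, threshold graphs and co-graphs, trees, the clique-size equality --- and then generalizes the conjecture to simplicial complexes. So there is no complete proof to compare against, and your proposal, to its credit, does not pretend to give one: you explicitly concede that your peeling argument terminates only under structural hypotheses and retreat to the same regimes the paper handles. Within those regimes your sketch is sound and closely parallels the paper's own arguments: the $t = n-1, n$ reduction via the trace identity $\sum_i \lambda_i = 2e$ and $e \leq \binom{n}{2}$ is the standard argument the paper invokes; your $t=1$ argument, restricting to the component attaining $\lambda_1$ and using $\lambda_1 \leq n_c \leq e_c + 1$, is the same connectedness trick the paper uses in generalized form ($f_{k-1} \geq n - k + 1$ together with $\lambda_1 \leq n$) in its lemma on first and last partial sums; and the threshold-graph case via Merris's equality $\sum_{i=1}^t \lambda_i = \sum_{i=1}^t d_i^T$ plus counting is precisely how the paper's Section 3 proceeds. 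Your observation that Grone--Merris--Bai genuinely overshoots --- with the correct counterexamples $3K_2$ at $t=1$ and $K_{2,m}$ at $t=2$ --- is a sharper articulation of the paper's remark that Brouwer's bound is better than Bai's exactly on split graphs, and it correctly identifies why no purely degree-based argument can settle the conjecture.

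The genuine gap --- unavoidable, since the conjecture is open --- lies in the only new mechanism you propose: peeling off cone vertices while tracking the spectrum by Cauchy interlacing. Interlacing applies to a principal submatrix of $L$ obtained by deleting a row and column, but that submatrix is \emph{not} the Laplacian of the vertex-deleted graph: the diagonal entries of the neighbors of the deleted vertex still count the removed edges, so the matrix you interlace against is not a graph Laplacian and the induction does not close. You also give no quantitative lemma controlling how much of the budget $\binom{t+1}{2}$ each peeling step may consume, nor a reason the ``recovered slack'' accumulates in your favor; reading $\binom{t+1}{2}$ as the budget of a $t$-clique is a heuristic, not an inequality. As it stands, the proposal proves exactly the cases already known and cited in the paper and leaves all intermediate $t$ for general graphs untouched, which is the honest best one can expect for this statement.
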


Despite being posed in 2008, this conjectural family of inequalities remains open except for a few special classes of graphs, such as trees, regular graphs, co-graphs, and graphs of a given matching number or maximum degree. It is a straightforward consequence of the above observations on the largest Laplacian eigenvalue that this conjecture holds for $t = 1$. Likewise, Brouwer's conjectural family of inequalities holds for $t = n-1$ and $n$ due to the observation on the sum of the Laplacian eigenvalues of graphs. Separately, the conjecture has also been shown to hold for $t = 2$ in \cite{H}.

In this work, we first expand on a known proof of Brouwer's conjecture for threshold graphs to show that the conjectural family of inequalities holds with equality when $t$ is the maximum clique size of the graph minus one. This is discussed in Section \ref{sec:thresh}. We then revert our attention to the more general case of abstract simplicial complexes of any dimension and present a conjectural family of inequalities which generalize Brouwer's conjecture. We define abstract simplicial complexes, which are the objects of interest, in Section \ref{sec:simpcomp}. We then introduce the new conjectural family of inequalities for simplicial complexes and discuss them in Section \ref{sec:gbc}. The conjecture is also stated below:

\begin{conj}
\label{red}
Given an (abstract) simplicial complex $S$ of dimension $k-1$ and Laplacian eigenvalues $\lambda(S) = \left( \lambda_1 \geq \lambda_2 \ldots \right)$, for all $t \in \mathbb{N}$,
\[ \sum_{i=1}^{t} \lambda_i \leq (k - 1) f_{k-1}+ \binom{t+k -1}{k}\]
where $f_{k-1}$ is the number of $k-1$ dimensional faces in $S$.
\end{conj}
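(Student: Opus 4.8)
The plan is to model the argument on the two complementary ways Brouwer's conjecture is attacked for graphs: pin down the extreme partial sums by exact spectral identities, and control the intermediate sums by majorizing the spectrum with combinatorial ``degree'' data. Throughout I would work with the top boundary map $\partial := \partial_{k-1}$ of $S$ and the combinatorial Laplacian $L = \partial^{T}\partial$ on the $(k-1)$-faces (equivalently its up-version $\partial\partial^{T}$ on the $(k-2)$-faces, which shares the same nonzero spectrum), so that $\lambda(S)$ is the spectrum of $L$.

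First I would establish the two anchoring cases. The trace identity $\sum_i \lambda_i = \operatorname{tr}(\partial^{T}\partial) = k\,f_{k-1}$ holds because each $(k-1)$-face has exactly $k$ facets and hence contributes $k$ nonzero entries to $\partial$; this generalizes $\sum_i\lambda_i = 2e$ and, together with non-negativity of the discarded eigenvalues, controls the largest relevant $t$ (the analogue of the $t=n-1,n$ cases for graphs). For $t=1$ I would bound $\lambda_1$ by a Rayleigh-quotient/Gershgorin estimate in terms of the largest number of $(k-1)$-faces through a common ridge, generalizing $\lambda_1\le n$, and then verify the purely numerical inequality $\lambda_1 \le (k-1)f_{k-1}+1$.

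The engine for intermediate $t$ is a higher-dimensional Grone--Merris majorization: the spectrum of $L$ should be majorized by the conjugate ``face-degree'' partition $d^{T}$, where $d^{T}_i$ counts the ridges lying in at least $i$ of the $(k-1)$-faces, yielding $\sum_{i=1}^{t}\lambda_i \le \sum_{i=1}^{t} d^{T}_i = \sum_{\sigma}\min(\deg\sigma,t)$ summed over ridges $\sigma$. For \emph{shifted} complexes this majorization is an equality by the Duval--Reiner theorem, so there the conjecture collapses to the combinatorial statement $\sum_{\sigma}\min(\deg\sigma,t)\le (k-1)f_{k-1}+\binom{t+k-1}{k}$, which I would prove by double counting: the compressed (stacked) structure of a shifted complex forces $d^{T}$ into a staircase whose partial sums split into a diagonal contribution of $(k-1)f_{k-1}$ and a simplex-counting remainder bounded by $\binom{t+k-1}{k}$. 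This delivers the whole family for shifted complexes, and the sharper bound that is linear in the dimension follows for trees from their far sparser ridge-degree profile.

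The main obstacle is that the majorization engine is too weak in general, exactly as Grone--Merris--Bai is too weak to imply Brouwer for arbitrary graphs: already the analogue of a regular graph, with all ridge-degrees equal to some $r$ satisfying $t < r < 2t$, makes $\sum_{\sigma}\min(\deg\sigma,t)$ overshoot $(k-1)f_{k-1}+\binom{t+k-1}{k}$ once $f_{k-1}$ is large, so the combinatorial bound simply fails even though the spectral bound we want still holds. Consequently the intermediate cases cannot be reduced to counting and must be handled by genuinely spectral tools -- Cauchy-type interlacing after deleting a ridge or a matching of top faces, convexity/Schur--Horn arguments, and the trace identity as a global constraint. This is why I expect to reach the $t$-th sum only under a matching-number hypothesis (a matching of top faces exceeding $t$ supplies enough independent structure to interlace against) rather than for the full range of $t$; closing this gap uniformly in $t$ and over all complexes is the crux, and is precisely what keeps the statement conjectural.
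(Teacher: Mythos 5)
You were asked to prove Conjecture~\ref{red}, and it is worth saying plainly that the paper does not prove it either: it is posed as an open conjecture, and the paper establishes only special cases (shifted $k$-families, simplicial trees, $t=1$ and $t=2$, the last partial sums, and the $t$-th sum when the dimension and the matching number both exceed $t$). Your closing concession that the statement remains conjectural is therefore the correct endpoint, and your overall architecture --- the trace identity $\sum_i \lambda_i = k f_{k-1}$ for the last sums, Duval--Reiner for shifted complexes, and a matching hypothesis for intermediate $t$ --- parallels the paper's program. However, two of your concrete mechanisms are wrong or unavailable.

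First, your ``majorization engine'' is misstated. The Duval--Reiner theorem for shifted complexes gives equality between $\sum_{i=1}^{t}\lambda_i$ and the partial sums of the conjugate of the \emph{vertex} degree sequence (the degree of a vertex being the number of facets containing it), not of your ridge-degree sequence, in which $d_i^T$ counts ridges lying in at least $i$ facets. These genuinely differ: the complete $3$-family on $4$ vertices is shifted and has spectrum $(4,4,4,0,0,0)$, while each of its six edges has ridge degree $2$, so your $d^T=(6,6,0,\ldots)$ and your claimed equality already fails at $t=1$. Moreover, for general complexes the majorization you propose as the engine for intermediate $t$ is, with the correct vertex-based $d^T$, precisely the Duval--Reiner conjecture, which is open in dimension greater than one (Bai's theorem covers only graphs), so it cannot be leaned on at all; and your ``double counting of a staircase'' for the shifted case is not developed. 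The paper's proof of Theorem~\ref{thm:shifted} takes a different and concrete route: induction along the Gale order, deleting a Gale-maximal facet $F$ whose first $k-1$ vertices each have degree at least $t+1$, so that by the Duval--Reiner \emph{equality} the spectral side drops by less than $k-1$ while the right-hand side drops by exactly $k-1$, together with a separate counting argument when $S$ has fewer than $t+k$ vertices.

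Second, for the higher partial sums the paper's mechanism is not Cauchy interlacing but a minimal-counterexample argument that your sketch is missing. By Fan's theorem, a minimum-cardinality counterexample $S$ to the $t$-th inequality can contain no $k$-subfamily $H$ with $\sum_{i=1}^{t}\lambda_i(H) \le (k-1) f_{k-1}(H)$ (Lemma~\ref{lm:forbidden}); a matching of size greater than $t$ supplies exactly such a forbidden $H$, namely a disjoint union of single-facet complexes $K_k$, whose top-$t$ eigenvalue sum is $tk \le (k-1)M_S$ (Theorem~\ref{thm:hps}). The same forbidden-subfamily device, combined with a case analysis on the shortest cycle of the ridge graph and the classification of matching-number-one families as joins, is what yields the full $t=2$ case (Theorem~\ref{thm:sps}); your plan, which defers everything beyond the matching hypothesis to unspecified interlacing and Schur--Horn arguments, does not reach it.
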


Brouwer's conjecture is a special case of Conjecture \ref{red} when $k = 2$, where $f_1$ corresponds to the number of edges in the graph. In Section \ref{sec:gbc}, we show that this conjecture holds for various classes of simplicial complexes such as shifted simplicial complexes, which generalize threshold graphs. We also give a tighter bound (linear in the number of dimensions of the simplicial complex) for simplicial trees. We show that the conjectural family of inequalities holds for $t = 1, 2,$ as well as the last partial sums. These generalize many known proofs for graphs to the case of simplicial complexes of any dimension. We further show that the conjecture holds for the $t^{\text{th}}$ partial sum for all simplicial complexes with dimension at least $t$ and matching number greater than $t$. En route, we develop machinery that may give further insights into related conjectures on the Laplacian eigenvalues of simplicial complexes. These conjectures are stated and discussed in Section \ref{sec:related}.

\section{Related Work}\label{sec:related}
%TODO: clean up language to it flows better and add anything else that may be useful

While the conjecture has been open since it was posed in 2008, steady progress has been made to resolve Brouwer's conjecture for several classes of graphs. Known cases include trees, where tighter bounds have been given \cite{tree2,GZW,H}, regular graphs \cite{M}, unicycle, bicycle, and tricycle graphs \cite{DZ,WHL,YY}, co-graphs \cite{M}, graphs of a given clique size, vertex covering, diameter, matching number, and maximum degree \cite{CLF,DMG,DZ,G2,RV}.
In addition, the Brouwer's conjecture is also shown to hold for $t = \{1, 2, n-1, n\}$, where $n$ is the number of vertices, as well as for connected graphs when $t$ is sufficiently large \cite{C2,H,YY}.

Brouwer's conjecture is intimately related to several conjectures in spectral graph theory. A closely related conjecture is that posed by Grone and Merris in 1994, which remained unresolved until the 2010 paper by Bai \cite{Bai,GM}. This theorem states that the Laplacian spectrum of $G$ is majorized by the conjugate partition of $G$. Or, equivalently,
\[
\sum^{t}_{i=1} \lambda_i \leq \sum^{t}_{i=1} d^T_i.
\]
Throughout this paper, we will refer to this result as Bai's theorem.

A noteworthy observation is that Bai's theorem needs significantly more information than Brouwer's conjecture; namely, the former requires the conjugate partition of the graph while the latter only uses the number of edges. However, the bound given by Brouwer's conjecture is known to be sharper than that given by Bai's theorem for a given graph $G$ and $t$ if and only if $G$ is a split graph: i.e., a graph where the vertices can be partitioned into a clique and an independent set \cite{M}.

Other conjectures on the partial sum of the Laplacian eigenvalues of graphs include Zhou's conjecture, which looks at the sum of the powers Laplacian eigenvalues of graphs \cite{Z}. The partial sum of the Laplacian eigenvalues of graphs is connected with the Laplacian energy, as discussed in \cite{CLF,GZ,lapenergy,Z2}.

Related to simplicial complexes, the Grone-Merris conjecture was generalized for simplicial complexes by Duval and Reiner \cite{DR}, which is discussed in Section \ref{sec:gbc}. While the conjecture is resolved for graphs by Bai, the generalized conjecture, which we refer to as the Duval-Reiner conjecture, remains open for simplicial complexes. (This conjecture is discussed in \ref{sec:simpcomp} after introducing the necessary notations.) An exception is the special case of shifted simplicial complexes, which generalize threshold graphs \cite{DR}. The techniques developed in this work, and especially those related to matching in simplicial complexes, may yield further insights into the Duval-Reiner conjecture for further classes of simplicial complexes.

\section{Brouwer's Conjecture for Threshold Graphs}\label{sec:thresh}

Before stating the generalized conjecture and presenting results related to it, we present a result related to Brouwer's conjecture for threshold graphs. We first review two operations on graphs that will be useful for defining threshold graphs:

Given two graphs $G_1$ and $G_2$ with disjoint vertex sets, we can obtain a new graph which is the \emph{disjoint union} of these. We denote this graph by $G := G_1 \sqcup G_2$.\footnote{We use $\sqcup$ rather than $\cup$ to denote the disjoint union of graphs (and later simplicial complexes) that have disjoint vertex sets.} This graph has the disjoint union of the vertex sets of $G_1$ and $G_2$ as its vertex set, and likewise for its edge set. The Laplacian matrix of this new graph $G$ is the direct sum of the Laplacian matrices of $G_1$ and $G_2$, and therefore the Laplacian spectrum of $G$ is the disjoint union of the Laplacian spectra of $G_1$ and $G_2$.

The \emph{complement graph} of $G$, which we denote by $G^c$, is a graph on the same vertex set as $G$ where two vertices are adjacent in $G^c$ if and only if they are not adjacent in $G$. If the Laplacian spectrum of $G$ is $\left( \lambda_1 \geq \lambda_2 \geq \ldots \geq \lambda_n \right)$, then that of $G^c$ is $\left( 0 \leq n-\lambda_{n-1} \leq \ldots \leq n-\lambda_1\right)$, as shown in \cite{BH}. We relabel the Laplacian spectrum of $G^c$ by $\left( \lambda_1^c \leq \ldots \leq \lambda_{n-1}^c \leq \lambda_n^c=0\right)$.

These operations allow us to define certain classes of graphs:

\begin{defi}
A \emph{complement-reducible graph} (or co-graph) is a graph that is inductively built using the following rules:
\begin{itemize}
\item A single vertex is a co-graph,
\item If $G$ is a co-graph, then so is $G^c$, and
\item If $G_1$ and $G_2$ are co-graphs on disjoint vertex sets, then so is $G_1 \sqcup G_2$.
\end{itemize}
\end{defi}

We note the following lemma, whose proofs can be found in \cite{M} or reproduced using the above statements about how we obtain the Laplacian spectrum of $G_1 \sqcup G_2$ and also of $G^c$.

\begin{lm}
\label{comp}
A graph $G$ satisfies Brouwer's conjecture if and only if $G^c$ also satisfies Brouwer's conjecture.
\end{lm}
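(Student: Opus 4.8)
The plan is to prove the biconditional by showing that the quantity $\sum_{i=1}^t \lambda_i - e - \binom{t+1}{2}$ transforms in a controlled way under complementation, so that nonpositivity for $G$ at one index corresponds to nonpositivity for $G^c$ at a (possibly different) index. The starting point is the spectral relation recalled in the excerpt: if $G$ has Laplacian spectrum $\lambda_1 \geq \cdots \geq \lambda_n$ with $\lambda_n = 0$, then the nonzero-adjacent eigenvalues of $G^c$ are $n - \lambda_i$, together with a mandatory $0$. Concretely, I would use the relabeling from the excerpt, $\lambda_i^c = n - \lambda_{n-i}$ for the relevant indices, and the fact that $e(G) + e(G^c) = \binom{n}{2}$.

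First I would compute a partial sum for $G^c$ directly. Using $\sum_{i=1}^n \lambda_i = 2e(G)$ and the complementation formula, a partial sum $\sum_{i=1}^t \lambda_i^c$ of the largest eigenvalues of $G^c$ can be rewritten as $tn - \sum_{i=1}^{t} \lambda_{?}$ for an appropriate block of eigenvalues of $G$, i.e. it is $tn$ minus a \emph{tail} sum of the spectrum of $G$. The algebraic heart of the argument is to convert that tail sum into a head (partial) sum of $G$ via the trace identity, turning a statement about the top $t$ eigenvalues of $G^c$ into a statement about the top $n-1-t$ (or similarly indexed) eigenvalues of $G$. I would carry this out carefully, tracking the guaranteed zero eigenvalue so the index bookkeeping is exact.

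Next I would substitute $e(G^c) = \binom{n}{2} - e(G)$ into Brouwer's bound for $G^c$ and simplify. The goal is to show that the Brouwer inequality for $G^c$ at index $t$ is \emph{equivalent} — after the trace substitution and the binomial identity $\binom{t+1}{2} + \binom{n-t+1}{2} + t(n-t) = \binom{n+1}{2}$ type of cancellation — to the Brouwer inequality for $G$ at the complementary index $n-1-t$. Since the hypothesis that $G$ satisfies Brouwer's conjecture means the inequality holds for \emph{all} $t \in [n]$, the complementary index is again in range, so the implication goes through for every $t$; by symmetry (the complement of $G^c$ is $G$) the converse is immediate, giving the biconditional.

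The main obstacle I anticipate is the index accounting around the forced zero eigenvalue $\lambda_n = 0$ and the extra zero that complementation introduces: getting the precise correspondence between index $t$ for $G^c$ and index $n-1-t$ (rather than $n-t$) for $G$ right, and verifying that the edge-count and binomial terms cancel exactly rather than up to an off-by-one error. I would guard against this by checking the boundary cases $t=1$, $t=n-1$, and $t=n$ explicitly against the known facts quoted in the introduction, since those must come out consistently for the correspondence to be correct.
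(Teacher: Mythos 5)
Your proposal is correct and is essentially the argument the paper has in mind: the paper does not write out a proof but defers to \cite{M} and to the complement-spectrum fact $\lambda_i(G^c)=n-\lambda_{n-i}(G)$, and your plan — combining that fact with $e(G)+e(G^c)=\binom{n}{2}$, the trace identity, and the binomial cancellation $\binom{n}{2}+\binom{t+1}{2}-tn=\binom{n-t}{2}$ to match index $t$ for $G^c$ with index $n-1-t$ for $G$ — is exactly how that cited proof goes, with the correct index correspondence and the right attention to the forced zero eigenvalue and the boundary cases $t=n-1,n$.
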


\begin{lm}
\label{disjoint}
If two disjoint graphs $G_1$ and $G_2$ satisfy Brouwer's conjecture, then so does $G = G_1 \sqcup G_2$.
\end{lm}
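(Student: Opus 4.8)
The plan is to exploit the fact, recorded in the excerpt, that the Laplacian spectrum of $G = G_1 \sqcup G_2$ is exactly the multiset union of the spectra of $G_1$ and $G_2$ merged into decreasing order, together with the additivity of edge counts, $e = e_1 + e_2$. The key observation is that the $t$ largest eigenvalues of $G$ are obtained by taking the $a$ largest eigenvalues of $G_1$ and the $b$ largest of $G_2$ for some nonnegative integers with $a + b = t$: since we are selecting the overall largest values from the merged list, whatever is drawn from $G_1$ must be precisely its top $a$ eigenvalues, and whatever is drawn from $G_2$ precisely its top $b$. Hence, for an appropriate split,
\[ \sum_{i=1}^{t} \lambda_i(G) = \sum_{i=1}^{a} \lambda_i(G_1) + \sum_{i=1}^{b} \lambda_i(G_2). \]

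First I would fix such a split $a + b = t$ and apply the hypothesis that $G_1$ and $G_2$ each satisfy Brouwer's conjecture to the two partial sums separately, obtaining
\[ \sum_{i=1}^{a} \lambda_i(G_1) \leq e_1 + \binom{a+1}{2}, \qquad \sum_{i=1}^{b} \lambda_i(G_2) \leq e_2 + \binom{b+1}{2}. \]
Adding these two bounds and using $e = e_1 + e_2$ reduces the lemma to the purely combinatorial inequality $\binom{a+1}{2} + \binom{b+1}{2} \leq \binom{t+1}{2}$ whenever $a + b = t$. I would verify this directly: expanding both sides gives $\binom{a+1}{2} + \binom{b+1}{2} = \tfrac{1}{2}(a^2 + b^2 + t)$ and $\binom{t+1}{2} = \tfrac{1}{2}(t^2 + t)$, so the claim is equivalent to $a^2 + b^2 \leq (a+b)^2$, i.e. $0 \leq 2ab$, which holds since $a, b \geq 0$. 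This superadditivity of the triangular-number term is the conceptual heart of the argument: it expresses why the $+\binom{t+1}{2}$ slack in Brouwer's bound behaves correctly under disjoint union.

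Finally I would handle the boundary bookkeeping. The split must satisfy $0 \leq a \leq n_1$ and $0 \leq b \leq n_2$, which is automatic because each graph contributes at most as many eigenvalues as it has vertices; the degenerate cases $a = 0$ or $b = 0$ are covered by $\binom{1}{2} = 0$ together with the convention that an empty partial sum is zero. The main (and essentially only) obstacle is identifying which eigenvalues fall into the top $t$ of the merged spectrum, since one must argue that no larger eigenvalue of either graph is ever skipped in favor of a smaller one; once the correct split is pinned down, the remaining inequality is immediate. I expect the cleanest way to make the selection argument rigorous is to note that the merged sum is invariant under tie-breaking, so the displayed identity holds for at least one valid choice of $(a,b)$, and that choice suffices.
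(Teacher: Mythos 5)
Your proof is correct and follows exactly the route the paper intends: the paper does not write out a proof, instead deferring to \cite{M} and noting that the lemma can be reproduced from the stated fact that the Laplacian spectrum of $G_1 \sqcup G_2$ is the multiset union of the spectra of $G_1$ and $G_2$. Your argument --- splitting the top $t$ eigenvalues of the merged spectrum as the top $a$ of $G_1$ and the top $b$ of $G_2$ with $a+b=t$, applying the hypothesis to each part, and reducing to the superadditivity $\binom{a+1}{2}+\binom{b+1}{2}\le\binom{t+1}{2}$ (equivalent to $0\le 2ab$), with the $a=0$ or $b=0$ cases handled trivially --- is precisely the standard fleshing-out of that remark.
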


\begin{lm}
Co-graphs satisfy Brouwer's conjectural family of inequalities.
\end{lm}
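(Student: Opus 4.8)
The plan is to prove that co-graphs satisfy Brouwer's conjecture by \emph{structural induction} on the recursive definition of co-graphs, using the two lemmas just established (Lemma \ref{comp} and Lemma \ref{disjoint}) as the inductive machinery. The definition tells us that every co-graph is built from single vertices by repeatedly applying two operations: taking complements and taking disjoint unions. So the natural strategy is to show that the class of graphs satisfying Brouwer's conjecture is closed under both operations and contains the base case, which immediately forces it to contain all co-graphs.

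First I would dispose of the base case. A single vertex $K_1$ has Laplacian spectrum consisting only of $\lambda_1 = 0$, and it has $e = 0$ edges. For the only relevant value $t = 1$, the inequality reads $\lambda_1 = 0 \leq e + \binom{2}{2} = 0 + 1 = 1$, which holds trivially. Hence the single vertex, the atom of the inductive construction, satisfies Brouwer's conjecture.

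Next comes the inductive step, which is where the two lemmas do all the work. Suppose, as inductive hypothesis, that every co-graph appearing strictly earlier in the construction satisfies Brouwer's conjecture. A co-graph $G$ is formed in one of two ways. If $G = H^c$ for some co-graph $H$, then by hypothesis $H$ satisfies the conjecture, and Lemma \ref{comp} gives that $G = H^c$ does too. If instead $G = G_1 \sqcup G_2$ for co-graphs $G_1, G_2$ on disjoint vertex sets, then by hypothesis both $G_1$ and $G_2$ satisfy the conjecture, and Lemma \ref{disjoint} gives that $G$ does as well. In either case the newly constructed co-graph satisfies Brouwer's conjecture, completing the induction.

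I do not anticipate a genuine obstacle here, since the content has already been packaged into Lemmas \ref{comp} and \ref{disjoint}; the remaining task is purely the bookkeeping of an induction over a recursive class. The one point requiring mild care is making the induction rigorous: co-graphs do not come with a canonical ``depth,'' so I would either induct on the number of vertices (noting that complementation preserves vertex count while disjoint union strictly increases it, so one must phrase the complement step carefully) or, more cleanly, induct on the number of construction steps used to build $G$ in some fixed derivation. Phrasing it as induction on the length of a construction sequence sidesteps the awkwardness that complementation does not decrease vertex count, and it makes the appeals to Lemma \ref{comp} and Lemma \ref{disjoint} line up with strictly shorter subderivations.
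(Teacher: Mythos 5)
Your proof is correct and follows exactly the same route as the paper's: the base case of a single vertex plus an inductive application of Lemma \ref{comp} and Lemma \ref{disjoint} over the recursive construction of co-graphs. Your added care about inducting on the length of a construction sequence (rather than on vertex count) is a sensible way to make the paper's one-line argument fully rigorous, but it is the same proof.
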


\begin{proof}
This is a consequence of the fact that a single vertex satisfies Brouwer's conjectural family of inequalities and an inductive application of Lemma \ref{comp} and Lemma \ref{disjoint}.
\end{proof}

Threshold graphs are a special subclass of co-graphs. They are defined in the same way as co-graphs, except we require $G_1$ to be an isolated vertex. Equivalently, threshold graphs are inductively constructed one vertex at a time, where each vertex is either an isolated vertex or a cone vertex (i.e., a vertex connected to all existing vertices at the time of addition). Two special examples of threshold graphs are the complete graph on $n$ vertices, $K_n$, which is obtained by adding a cone vertex at each step and the star graph on $n$ vertices, $S_n$, where we add $n-1$ isolated vertices followed by a final cone vertex. A known result for this class of graphs is due to Merris \cite{M}, which states:

\begin{thm}
Given a threshold graph $S$ with Laplacian eigenvalues $\lambda(S) = \left( \lambda_1 \geq \lambda_2 \geq \ldots \lambda_n \right)$ and conjugate partition $d^T(S) = \left(d_1^T, d_2^T, \ldots, d_n^T \right)$,
$$\sum^{t}_{i=1} \lambda_i = \sum^{t}_{i=1} d^T_i,$$
for all $t \in [n]$.
\end{thm}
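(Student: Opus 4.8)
The plan is to prove the stronger statement that for a threshold graph, the Laplacian spectrum equals the conjugate partition of the degree sequence, from which the equality of partial sums follows immediately. The key structural fact I would exploit is that threshold graphs are built inductively one vertex at a time, where each new vertex is either isolated (added with no edges) or a cone vertex (connected to every existing vertex). Since Bai's theorem already gives $\sum_{i=1}^t \lambda_i \le \sum_{i=1}^t d_i^T$ for all graphs, it suffices to establish the reverse inequality, or better, to show the two sequences $\lambda(S)$ and $d^T(S)$ coincide entry-by-entry for threshold graphs.

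First I would set up an induction on the number of vertices $n$. The base case of a single vertex is trivial, since both sequences are $(0)$. For the inductive step, I would track how both the Laplacian spectrum and the conjugate partition transform under the two allowed vertex-addition operations. The cleanest route is to use the two operations already introduced in the excerpt: disjoint union with an isolated vertex, and coning (which is the complement-dual of disjoint union, since adding a cone vertex to $G$ corresponds to taking a disjoint union in the complement and complementing back). For the spectrum side, adding an isolated vertex to $G$ appends a $0$ eigenvalue, while coning a vertex onto $G$ on $m$ vertices shifts each nonzero structure and introduces an eigenvalue equal to $m+1$; these facts follow from the complement relation $\lambda_i^c = n - \lambda_{n-i}$ stated in the excerpt together with the behavior of disjoint union. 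For the conjugate-partition side, I would verify directly that adding an isolated vertex leaves $d^T$ unchanged except for its length (appending a trailing zero), while coning increments $d_i^T$ appropriately: every existing degree goes up by one, and the new vertex has degree equal to the current vertex count.

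The heart of the argument is to show that these two transformations act \emph{identically} on $\lambda(S)$ and on $d^T(S)$. Concretely, I would prove two matching lemmas: (i) if $\lambda(G) = d^T(G)$, then disjoint union with an isolated vertex preserves the equality; and (ii) if $\lambda(G) = d^T(G)$, then coning preserves the equality. For (i) this is essentially immediate since both operations append a zero. For (ii), the coning operation on the spectrum is most transparent through the complement: $\lambda(G^c)$ and $\lambda(G)$ are related by $i \mapsto n - i$ reflection and negation, and the conjugate partition of a threshold graph is self-conjugate in a way that mirrors this reflection under complementation (a known feature of threshold graphs, whose degree sequences correspond to self-conjugate or staircase partitions). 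I would make this precise by checking that the conjugate partition transforms under complementation by the same reflection rule $d_i^T(G^c)$ relates to $d_{n-i}^T(G)$ that governs the eigenvalues, so that the inductive hypothesis is carried through the complement step.

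The main obstacle I anticipate is bookkeeping the exact index shifts in the coning step: one must be careful that the eigenvalue introduced by coning lands in the correct sorted position and that the corresponding increment in the conjugate partition occurs at the matching index, since both sequences are maintained in sorted (weakly decreasing) order. Threshold graphs have the special property that their Laplacian eigenvalues are exactly the conjugate of the degree sequence because they realize the extremal case of majorization, so the reflection symmetry under complementation must be shown to respect sorting. Rather than recompute spectra directly, I would lean on Lemma~\ref{comp} and Lemma~\ref{disjoint} together with the explicit complement spectrum formula to reduce everything to verifying the conjugate-partition transformation rules, which are purely combinatorial and can be checked by counting vertices of each degree before and after each operation. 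Once both transformation rules are shown to agree, the entry-wise equality $\lambda(S) = d^T(S)$ follows by induction, and summing the first $t$ entries yields the claimed equality of partial sums.
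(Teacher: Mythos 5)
The paper does not prove this theorem at all: it is quoted as a known result of Merris (cited as \cite{M}) and immediately followed by a worked example, so there is no in-paper proof to compare against. Your proposal is a correct, self-contained argument, and it proves the stronger entrywise statement $\lambda(S) = d^T(S)$ rather than just the equality of partial sums (which is indeed how Merris' theorem is usually stated). The inductive skeleton is sound: a single vertex satisfies the equality; disjoint union with an isolated vertex appends a $0$ to both sequences; and coning can be routed through complementation via $\hat{G} = (G^c \sqcup K_1)^c$, using the complement spectrum rule $\lambda_i(G^c) = n - \lambda_{n-i}(G)$ stated in the paper. The one place where your write-up is muddier than it needs to be is the appeal to self-conjugacy of threshold degree sequences: no special property of threshold graphs is needed there, because the conjugate partition of \emph{any} graph obeys the same reflection rule as the spectrum, namely $d_i^T(G^c) = n - d_{n-i}^T(G)$ for $1 \le i \le n-1$ (count vertices with $\deg_{G^c} \ge i$, i.e.\ $\deg_G \le n-1-i$), with the index-$n$ entries of both sequences identically $0$. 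So complementation preserves the property $\lambda(G) = d^T(G)$ for all graphs, and your two matching lemmas follow without any extra structure. Your anticipated bookkeeping obstacle also resolves cleanly: after coning a graph on $m$ vertices the spectrum is $(m+1,\ \lambda_1+1,\ \ldots,\ \lambda_{m-1}+1,\ 0)$, which is automatically sorted since $\lambda_1 \le m$, and the conjugate partition satisfies $d_1^T(\hat{G}) = m+1$ and $d_i^T(\hat{G}) = d_{i-1}^T(G) + 1$ for $2 \le i \le m$, matching index by index. In short, the proposal is correct and supplies a proof the paper omits; it only needs the self-conjugacy digression replaced by the direct two-line verification of the complementation rule for conjugate partitions.
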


We show an example of a threshold graph construction and demonstrate Merris' theorem for threshold graphs.
\begin{eg}

We consider the following construction of a threshold graph built through a sequence of vertex additions: $\left(cone, isolated, isolated, cone \right)$, shown in Figure 1.

\vspace{2mm}
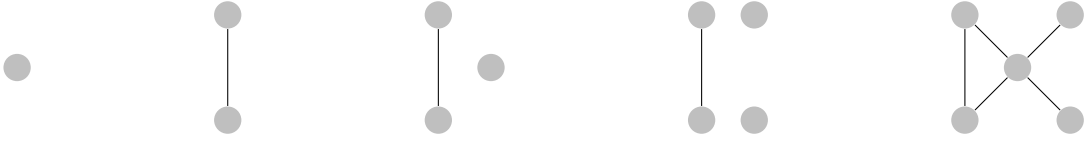
\begin{figure}[!ht]
\centering
\begin{tikzpicture}
{
\begin{tikzpicture}
[scale=0.7,auto=left,every node/.style={circle,fill=gray!50}]
\node (n1) at (0,1) {};

\node (n2) at (4, 0) {};
\node (n3) at (4, 2) {};

\node (n4) at (8, 0) {};
\node (n5) at (8, 2) {};
\node (n6) at (9, 1) {};

\node (n7) at (13, 0) {};
\node (n8) at (13, 2) {};
\node (n9) at (14, 0) {};
\node (n10) at (14, 2) {};

\node (n11) at (18, 0) {};
\node (n12) at (18, 2) {};
\node (n13) at (20, 0) {};
\node (n14) at (20, 2) {};
\node (n15) at (19, 1) {};

\draw[thin] (n2) edge[bend left=0] (n3);
\draw[thin] (n7) edge[bend left=0] (n8);
\draw[thin] (n4) edge[bend left=0] (n5);
\draw[thin] (n11) edge[bend left=0] (n12);
\draw[thin] (n15) edge[bend left=0] (n11);
\draw[thin] (n15) edge[bend left=0] (n12);f
\draw[thin] (n15) edge[bend left=0] (n13);
\draw[thin] (n15) edge[bend left=0] (n14);
\end{tikzpicture}

};
\end{tikzpicture}
\caption{A threshold graph constructed by adding a cone, isolated, isolated, and cone node, in order. }
\end{figure}

This last graph on $5$ vertices has conjugate partition $d^T = \left( 5, 3, 1, 1,0 \right) $. We also find that the Laplacian spectrum is $\lambda = \left( 5, 3, 1, 1, 0\right)$, consistent with Merris' theorem for threshold graphs. Note, this graph also satisfies Brouwer's conjecture for all $t \in [5]$ and satisfies it with equality for $t = 2$.
%$$ 5 + 3 = 5 + \binom{2 + 1}{2}.$$

\end{eg}

Since threshold graphs are a special class of co-graphs, Brouwer's conjectural family of inequalities holds for threshold graphs. Here, we extend this proof to show that Brouwer's conjecture holds with equality on threshold graphs when $t$ is the number of cone vertices in the above construction.

\begin{lm}
Given a threshold graph $S$, Brouwer's conjecture holds with equality for $t = c$, where $c$ is the number of cone vertices in the threshold graph construction (or equivalently the maximum clique size of $S$ minus 1).
\end{lm}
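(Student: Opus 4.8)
The plan is to combine Merris' theorem with a purely combinatorial identity about the conjugate partition of $S$. Since $S$ is a threshold graph, Merris' theorem gives $\sum_{i=1}^{c}\lambda_i = \sum_{i=1}^{c} d_i^T$, so Brouwer's bound is attained with equality at $t=c$ precisely when
\[
\sum_{i=1}^{c} d_i^T = e + \binom{c+1}{2}.
\]
Thus the entire problem reduces to proving this identity, and I would spend the bulk of the argument there. Note that the inequality ``$\le$'' is already known, since threshold graphs are co-graphs; so only the reverse bound---equivalently, the exact value of the partial conjugate sum---is at issue.

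First I would record the split structure coming directly from the construction sequence. Let $K$ be the set consisting of the $c$ cone vertices together with the initial vertex, and let $I$ be the set of vertices added as isolated vertices. A later cone is adjacent to every earlier vertex, so $K$ is a clique of size $c+1$; two isolated-added vertices are never adjacent, so $I$ is independent; and $V = K \sqcup I$. A short case check on cliques (any clique meets $I$ in at most one vertex, and such a vertex is adjacent only to cones added after it) shows $\omega(S) = c+1$, consistent with the hypothesis that $c$ is the maximum clique size minus one.

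Next I would use the Ferrers-diagram identity
\[
\sum_{i=1}^{c} d_i^T \;=\; \sum_{v} \min\bigl(\deg(v),\, c\bigr),
\]
which counts the cells in the first $c$ columns of the degree partition. The point is that on the two parts of the split decomposition the cap behaves in opposite, controlled ways. Every $v \in K$ is adjacent to the other $c$ vertices of $K$, so $\deg(v)\ge c$ and its term equals $c$, contributing $c(c+1)=2\binom{c+1}{2}$. Every $u \in I$ is adjacent only to cone vertices (at most $c$ of them), so $\deg(u)\le c$ and its term equals $\deg(u)$; since $I$ is independent, these degrees sum to the number of edges between $I$ and $K$, namely $e - \binom{c+1}{2}$. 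Adding the two contributions gives $2\binom{c+1}{2} + \bigl(e-\binom{c+1}{2}\bigr) = e + \binom{c+1}{2}$, as required.

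The main obstacle, and the step I would be most careful with, is the structural bookkeeping in the second paragraph: verifying that the construction really yields a clique/independent partition with the clique being a maximum clique of size exactly $c+1$, and that the two degree bounds ($\deg \ge c$ on $K$ and $\deg \le c$ on $I$) hold with no exceptions, so that the cap $\min(\cdot,\,c)$ splits cleanly. Once these facts are pinned down, the edge-counting identity and the appeal to Merris' theorem are routine. I would also sanity-check the whole chain against the worked example, where $c=2$, $d^T = (5,3,1,1,0)$, and indeed $5+3 = 8 = e + \binom{3}{2}$.
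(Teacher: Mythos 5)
Your proof is correct, but it takes a genuinely different route from the paper's. The paper proceeds by induction on the vertex-addition sequence: the base case is the first cone vertex (a star graph, attaining equality at $t=1$), and the inductive step shows that the equality $\sum_{i=1}^{c_j} d_i^T(S_j) = e(S_j) + \binom{c_j+1}{2}$ is preserved when an isolated vertex is appended (via the disjoint-union lemma) and that both sides grow by the same amount, $c_j + j + 1$, when a cone vertex is appended. You instead prove the same target identity in closed form: after invoking Merris' theorem exactly as the paper implicitly does, you exploit the split structure of threshold graphs ($K$ = initial vertex plus the $c$ cones, a clique; $I$ = the isolated-added vertices, an independent set) and the Ferrers-diagram identity $\sum_{i=1}^{c} d_i^T = \sum_v \min(\deg(v),c)$, with the cap resolving to $c$ on $K$ (every vertex there is adjacent to the other $c$ clique vertices) and to $\deg(u)$ on $I$ (each $u\in I$ is adjacent only to cone vertices added after it, so $\deg(u)\le c$). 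The degree bounds and the edge count $\sum_{u\in I}\deg(u) = e - \binom{c+1}{2}$ are all valid, so the argument goes through, and it checks out on the paper's worked example. What your route buys: it is non-inductive and self-contained, it makes transparent why equality occurs exactly at $t=c$ (the cap saturates precisely on the clique and counts each cross edge once), and it isolates a fact that holds for any split graph with the stated degree separation, which dovetails with the paper's Section 2 remark that Brouwer's and Bai's bounds are comparable exactly on split graphs. What the paper's route buys: it stays close to the constructive definition of threshold graphs and reuses machinery (the disjoint-union lemma) already established for the co-graph results, at the cost of some bookkeeping in the inductive step.
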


\begin{proof}
We show this by induction on the construction of $S$. Suppose $S$ is constructed one vertex at a time resulting in a sequence of threshold graphs $S_1, S_2, \ldots, S_n$, where $S_1$ is an isolated vertex and $S_n = S$. Consider the first instance of a cone vertex, $\ell$. The corresponding $S_\ell$ is a star graph. Star graphs on $m$ vertices are known to have Laplacian spectrum $\left(m, 1, 1, \ldots, 1,0\right)$, where eigenvalue $1$ has multiplicity $m-2$. Therefore, Brouwer's conjectural family of inequalities holds with equality for this class of graphs for $t = 1$.

For the inductive step, suppose that $S_j$ satisfies Brouwer's conjecture with equality for $t = c_j$, where $c_j$ is the number of cone vertices in $S_j$. That is,
\[ \sum^{c_j}_{i=1} d_i^T (S_j)= e (S_j) + \binom{c_j+1}{2}.\]
If $T_{j+1}$ is the disjoint union of $S_j$ and an isolated vertex, then $S_{j+1}$ continues to satisfy the conjecture with equality for the same $t$ by Lemma \ref{disjoint}. If $S_{j+1}$ is the result of a cone of $S_j$, then $e(S_{j+1}) = e(S_j) + j$ and,
$$\sum^{c_j}_{i=1} d_i^T(T_j) + c_j + j + 1 = \sum^{c_j+1}_{i=1} d_i^T (S_{j+1}).$$
Therefore,
\begin{align*}
\sum^{c_j+1}_{i=1} d_i^T (S_{j+1})& = e(S_j) + \binom{c_j+1}{2} + c_j + j + 1\\
& = e(S_{j+1}) + \binom{c_j+1}{2} + c_j + 1\\
& = e(S_{j+1}) + \binom{(c_j+1) + 1}{2}
\end{align*}
giving us the desired equality.
\end{proof}

\section{Laplacian Spectrum for Abstract Simplicial Complexes}\label{sec:simpcomp}

We now turn our attention to the more general setting of simplicial complexes, which will allow us to present the new conjecture and our main results.

\begin{defi}
An \emph{(abstract) simplicial complex} $S$ on a vertex set $\{v_1, v_2, \ldots, v_n\}$ is a collection of subsets of this vertex set, which we call the \emph{faces} or \emph{simplices}, that are closed under inclusion. That is, given $F$ which is a subset of this vertex set, if $F' \subseteq F$, then $F'$ is also in $S$.
\end{defi}

Given a face $F$, we denote its cardinality by $|F|$. Its dimension is $|F| - 1$. For instance, the face $\{v_1, v_2, v_3\}$ has dimension $2$. The dimension of a simplicial complex is the maximum dimension of a face in $S$. Note that we will interchangeably refer to $S$ as a $k$-family or a $k-1$-dimensional complex. A discussion on why it suffices to look at $k$-families can be found in \cite{DR}. Graphs are a special case when $k = 2$. Throughout this paper, we assume that all simplicial complexes are of dimension $k-1$ (or are $k$-families), unless explicitly stated otherwise.

\begin{defi}
The $f$-vector of a simplicial complex $S$ is the sequence,
$$f(S) = (f_{-1}(S), f_{0}(S), f_1(S), \ldots ), $$
where $f_i(S)$ is the number $i$-dimensional faces.
\end{defi}

We do not need to assume that all singletons of $\{v_1, v_2, \ldots, v_n\}$ are included in $S$. Therefore, the number of vertices $f_0(S)$ need not be $n$. We assume that $\{\emptyset\}$, which is the empty set and no other faces, is always included in $S$. Therefore, $f_{-1}(S) = 1$. If $S$ is a $k$-family, then $f_j(S) = 0$ for all values $j \geq k$.

The maximal faces under inclusion are called \emph{facets}. All $k$-families are assumed to be \emph{pure}, i.e., all facets have the same dimension. Therefore, $f_{k-1}(S)$ is the number of facets in $S$. It is straightforward to generalize our results by dropping the purity assumption.
%TODO prove this later and point them?

The \emph{degree} of a vertex $v_i$, denoted by $\deg(v_i)$, is the number of facets in which the vertex is included. We adapt the definitions of degree sequence $d(S)$ and conjugate partition $d^T(S)$ from the graph case. Note that $d(S)$ has dimension $n$ and $d^T$ has dimension $\binom{n - 1}{k - 1} +1$. As in the case for graphs, we drop the $S$ from our notations when there is no ambiguity.

For some results, it will also be useful to consider subfamilies of $k$-families.

\begin{defi}
A \emph{$k$-subfamily} $H$ of $S$ is a subfamily such that every face in $H$ is also a face in $S$. Note that we assume $H$ is also closed under inclusion and it has the same dimension as $S$.
\end{defi}

\begin{eg}
A $k-1$-dimensional \emph{complete simplicial complex} (or complete $k$-family) on $n \geq k$ vertices has, as its facets, the complete $k$-family on the vertex set $\{v_1, v_2, \ldots, v_n\}$. That is, it has $\binom{n}{k}$ facets, each of which correspond to subsets of $\{v_1, \ldots, v_k\}$ of size $k$. This $k$-family has degree sequence $d = \left( \binom{n-1}{k-1}, \binom{n-1}{k-1}, \ldots, \binom{n-1}{k-1} \right)$ and conjugate partition $d^T = \left( n, n \ldots, n, 0 \right)$.

A $k-1$-dimensional \emph{star simplicial complex} (or star $k$-family) on the vertex set $\{v_1, v_2, \ldots, v_n\}$ is the $k$-family with facets,
$$\{ \{v_1, v_2, \ldots, v_{k-1}, v_{k}\}, \{v_1, v_2, \ldots, v_{k-1}, v_{k+1}\}, \ldots, \{v_1, v_2, \ldots, v_{k-1}, v_{n}\} \}.$$
It has $n-k+1$ facets. Exactly $k$ of the vertices have degree $n-k+1$ and the remaining have degree $1$. Figure 2 depicts a star $2$-family on $5$ vertices. It has degree sequence $d(S) = (3, 3, 1, 1, 1)$ and conjugate partition $d^T(S) = (5, 2, 2, 0, 0)$.

\begin{figure*}[!ht]
\centering
\includegraphics[width=5cm]{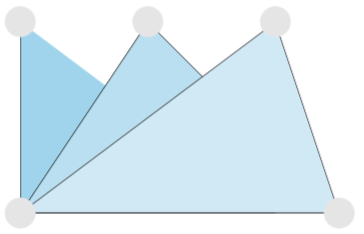}
\caption{an example of a star $k$-family: a star $2$-family on $5$ vertices.}
\end{figure*}

\end{eg}

To define the Laplacian spectrum of simplicial complexes, we recall from \cite{hatcher} that, given a simplicial complex $S$, we have chain groups $C_i(S)$ and simplicial maps between these chain group:
$$0 \rightarrow \ldots \xrightarrow{\partial_3}C_2(S) \xrightarrow{\partial_2} C_1(S) \xrightarrow{\partial_1} C_0(S) \xrightarrow{\partial_0} 0.$$
The $C_i$ are vector spaces with the basis being the $i$-dimensional faces of the simplicial complex. The $\partial$ are called \emph{(simplicial) boundary maps}. The boundary map $\partial_{k-1}$ goes from the vector space whose basis is the $k-1$-dimensional face to the vector space whose basis is the $k-2$-dimensional faces defined as,
$$\partial_i[v_0, \ldots, v_{k-1}] = \sum_{j = 0}(-1)^j [v_0, \ldots, v_{-j}, \ldots v_{k-2}].$$
Note that the basis elements $[v_0, \ldots, v_i]$ are regarded as having $[v_{w(0)}, \ldots, v_{w(i)}] = \text{sgn} (w) [v_0, \ldots, v_i]$ when indexing the basis element for a simplex $\{v_0, \ldots, v_i\}$ with its vertices in different orders. Furthermore, $\partial_i \partial_{i +1} = 0$, and so we can define the homology group $H_n(S) = \text{ker}(\partial_i) / \text{im} \partial_{i + 1}$.

These boundary maps can be written down as matrices. For instance, $\partial_2$ can be written down as a matrix whose columns are indexed by the $2$-dimensional faces and whose rows are indexed by the $1$-dimensional faces.

\begin{defi}
Given a $k-1$-dimensional simplicial complex $S$ with boundary maps defined as above, the \emph{Laplacian matrix} of $S$ is,
$$L(S) = \partial_{k - 1} \partial_{k - 1}^T.$$
\end{defi}

The boundary map $\partial_1$ is known as the oriented incidence matrix for graphs. And $\partial_1 \partial_1^T$ gives us an alternate way of defining the Laplacian matrix for graphs.

Given a simplicial complex $S$, its Laplacian spectrum is therefore the eigenvalues of the Laplacian matrix as defined above. We denote this by,
$$\lambda(S) = \left( \lambda_1 \geq \lambda_2 \geq \ldots \geq \lambda_{\binom{n}{k-1 }} = 0\right).$$
We are interested in the partial sum of the Laplacian spectrum of simplical complexes.

As in the discussion for graphs, we first present some results showing how the Laplacian spectra of simplicial complexes change when we consider some standard operations on simplicial complexes. Namely, we consider taking their disjoint union, complement, and simplicial join. Understanding how the Laplacian spectra change under each of these three basic operations will each play a key role in proving some of the main results in this work.

The first operation we consider is taking the \emph{disjoint union} of simplicial complexes or $k$-families. Before presenting this result, we note the following theorem about real, symmetric matrices.

\begin{thm}[Fan's Theorem]
Given real, symmetric matrices $A$ and $B$ of size $m$,
$$\sum_{i = 1}^t \lambda_i(A + B) \leq \sum_{i = 1}^t \lambda_i(A) +\sum_{i = 1}^t \lambda_i(B), \forall t \in \{1, 2, \ldots, m\},$$
where $\lambda_i (A)$ corresponds to the $i^{\text{th}}$ largest Laplacian eigenvalue of $A$ (likewise for $B$ and $A+ B$).
\end{thm}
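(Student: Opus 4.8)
The plan is to prove Fan's theorem via the variational (extremal) characterization of the sum of the top $t$ eigenvalues, often called Ky Fan's maximum principle: for any real symmetric $M$ of size $m$,
\[
\sum_{i=1}^t \lambda_i(M) = \max_{\substack{U \in \mathbb{R}^{m \times t} \\ U^T U = I_t}} \operatorname{tr}(U^T M U),
\]
the maximum being taken over all $m \times t$ matrices with orthonormal columns. Once this identity is in hand, the theorem follows in one line: for any admissible $U$ we have $\operatorname{tr}(U^T(A+B)U) = \operatorname{tr}(U^T A U) + \operatorname{tr}(U^T B U)$, and maximizing the left-hand side while bounding each term on the right by its own maximum yields exactly $\sum_{i=1}^t \lambda_i(A+B) \le \sum_{i=1}^t \lambda_i(A) + \sum_{i=1}^t \lambda_i(B)$.

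So the real work is establishing the maximum principle. First I would handle the easy ``$\ge$'' inequality by exhibiting a good $U$: taking its columns to be orthonormal eigenvectors of $M$ for the $t$ largest eigenvalues gives $\operatorname{tr}(U^T M U) = \sum_{i=1}^t \lambda_i(M)$, so the maximum is at least this value. For the reverse ``$\le$'' inequality, I would diagonalize $M = Q\Lambda Q^T$ with $\Lambda = \operatorname{diag}(\lambda_1,\dots,\lambda_m)$ and $Q$ orthogonal, and substitute $V := Q^T U$, which again has orthonormal columns. A short computation gives $\operatorname{tr}(U^T M U) = \operatorname{tr}(V^T \Lambda V) = \sum_{j=1}^m \lambda_j\, c_j$, where $c_j$ is the squared Euclidean norm of the $j$-th row of $V$, equivalently the $j$-th diagonal entry of the projection $VV^T$.

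The crux is then the constrained-optimization step: the vector $(c_1,\dots,c_m)$ satisfies $0 \le c_j \le 1$ and $\sum_j c_j = \operatorname{tr}(V^T V) = t$, because $VV^T$ is an orthogonal projection onto a $t$-dimensional subspace. Maximizing the linear functional $\sum_j \lambda_j c_j$ over this polytope is a rearrangement (or linear-programming) argument whose optimum places weight $c_j = 1$ on the indices of the $t$ largest eigenvalues, recovering $\sum_{i=1}^t \lambda_i(M)$. I expect this polytope bound --- verifying $0 \le c_j \le 1$ and that the linear optimum is attained at the stated extreme point --- to be the main obstacle, since it is the one place where the projection structure must be used carefully; everything else is bookkeeping with traces and orthonormality.
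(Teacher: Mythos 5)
Your proposal is correct, but there is nothing in the paper to compare it against: the paper states Fan's theorem as a known classical result (Ky Fan, 1949) and uses it as a black box, giving no proof at all. Your route is the standard one, and every step checks out. The maximum principle $\sum_{i=1}^t \lambda_i(M) = \max_{U^T U = I_t} \operatorname{tr}(U^T M U)$ does reduce the theorem to a one-line trace splitting; the ``$\ge$'' direction via top eigenvectors is immediate; and in the ``$\le$'' direction your constraints on $c_j = (VV^T)_{jj}$ are justified exactly as you expect: $VV^T$ is idempotent since $V^T V = I_t$, so $c_j = \|V^T e_j\|^2 \le \|e_j\|^2 = 1$, and $\sum_j c_j = \operatorname{tr}(V^T V) = t$. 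The linear-programming step you flag as the crux is handled cleanly by writing
\[
\sum_{j=1}^m \lambda_j c_j = t\lambda_t + \sum_{j=1}^m (\lambda_j - \lambda_t)c_j \le t\lambda_t + \sum_{j=1}^t (\lambda_j - \lambda_t) = \sum_{j=1}^t \lambda_j,
\]
using $c_j \le 1$ when $\lambda_j - \lambda_t \ge 0$ (i.e.\ $j \le t$) and $c_j \ge 0$ when $\lambda_j - \lambda_t \le 0$. One cosmetic note: the paper's statement says ``$i^{\text{th}}$ largest Laplacian eigenvalue,'' which is a slip --- $A$ and $B$ are arbitrary real symmetric matrices and the theorem concerns their ordinary eigenvalues, which is what your proof correctly addresses.
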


The following result is a direct consequence of Fan's theorem.

\begin{cor}\label{cor:fan}
Suppose $S_1, S_2, \ldots, S_r$ are disjoint $k$-families on the same vertex set, i.e., they share no $k$-subsets. Let $\lambda_i^j$ be the $i^{\text{th}}$ largest eigenvalue of $S_j$ and $S$ be the $k$-family $\cup_{j = 1}^r S_j.$ Then,
$$\sum_{i =1}^t \lambda_i(S) \leq \sum_{j = 1}^r \sum_{i = 1}^t \lambda_i^j.$$
\end{cor}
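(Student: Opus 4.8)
The plan is to show that the Laplacian matrix of the disjoint union $S = \cup_{j=1}^r S_j$ decomposes additively as $L(S) = \sum_{j=1}^r L(S_j)$, once every summand is regarded as a symmetric matrix on a common ambient space, and then to invoke Fan's Theorem. With this additive decomposition in hand the conclusion is immediate: applying Fan's Theorem to $L(S_1)$ and $L(S_2) + \cdots + L(S_r)$ and recursing gives $\sum_{i=1}^t \lambda_i(L(S)) \le \sum_{j=1}^r \sum_{i=1}^t \lambda_i(L(S_j))$ for every $t$, which is exactly the claimed bound since $\lambda_i(L(S_j)) = \lambda_i^j$.

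The one point requiring care is making sense of the sum $\sum_j L(S_j)$, since the boundary operator $\partial_{k-1}$ of each $S_j$ a priori acts between chain spaces indexed by the faces of $S_j$ alone. To fix this, I would fix the global ordering of the shared vertex set $\{v_1,\dots,v_n\}$ and regard every $\partial_{k-1}(S_j)$ as a matrix whose rows are indexed by \emph{all} $(k-2)$-subsets of the vertex set (a common ambient space playing the role of $C_{k-2}$) and whose columns are indexed by the facets of $S_j$, with entries given by the usual signed orientation convention. Because the $S_j$ share no $k$-subsets, each facet of $S$ lies in exactly one $S_j$, so the facets of $S$ partition as the disjoint union of the facets of the $S_j$. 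Consequently the boundary matrix of $S$ is the column concatenation $\partial_{k-1}(S) = [\,\partial_{k-1}(S_1)\mid \cdots \mid \partial_{k-1}(S_r)\,]$, all blocks sharing the same rows.

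A routine block computation then gives $L(S) = \partial_{k-1}(S)\,\partial_{k-1}(S)^T = \sum_{j=1}^r \partial_{k-1}(S_j)\,\partial_{k-1}(S_j)^T = \sum_{j=1}^r L(S_j)$, where each $L(S_j)$ is a symmetric positive-semidefinite matrix on the common space. Embedding $L(S_j)$ into this larger space only introduces additional zero eigenvalues (the rows and columns corresponding to $(k-2)$-faces absent from $S_j$), so the top $t$ eigenvalues, and hence $\sum_{i=1}^t \lambda_i^j$, are unaffected by the embedding. To apply Fan's Theorem, which is stated for two matrices, I would extend it to $r$ summands by induction on $r$: setting $A = L(S_1)$ and $B = \sum_{j=2}^r L(S_j)$, Fan's Theorem bounds $\sum_{i=1}^t \lambda_i(A+B)$ by $\sum_{i=1}^t \lambda_i(A) + \sum_{i=1}^t \lambda_i(B)$, and the inductive hypothesis controls the second term.

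The main obstacle is conceptual rather than computational: it is the bookkeeping needed to place all of the $L(S_j)$ on a single common space so that their sum is literally $L(S)$ and so that Fan's Theorem, which requires matrices of equal size, applies verbatim. Once the identity $L(S) = \sum_j L(S_j)$ is established and the zero-padding is seen to be harmless, the remainder is a direct invocation of Fan's Theorem, consistent with the result being billed as a direct consequence of it.
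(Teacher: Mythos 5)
Your proof is correct and takes essentially the same approach as the paper: the paper's own (very brief) argument is exactly that $L(S) = L(S_1) + L(S_2) + \cdots + L(S_r)$ once all the Laplacians are indexed on a common row/column set, followed by an application of Fan's theorem. The details you supply --- the column-concatenation of the boundary matrices, the harmlessness of zero-padding for the top eigenvalues, and the induction on $r$ to extend Fan's two-matrix statement --- are precisely the bookkeeping the paper leaves implicit.
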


This corollary holds since $L(S ) = L(S_1) + L(S_2) + \ldots + L(S_r)$, where we index the rows and columns of each of these Laplacian matrices the same way. We can then apply Fan's theorem to $L(S)$.

For this above result to hold, the $k$-families in consideration have to be, in a sense, disconnected. We define a generalized notion of connectedness in this setting, which is called \emph{ridge-connectedness}. To define this notion, we first construct ridge graphs from $k$-families.

\begin{defi}
A \emph{ridge graph} of a $k$-family is a graph that has a vertex corresponding to each facet of the $k$-family and an edge between two vertices if and only if the corresponding facets intersect in co-dimension one.
\end{defi}

Note that ridge graphs do not uniquely identify $k$-families. To see this, take the following two $3$-families, $S = \{ \{1,2, 3\}, \{1, 2, 4\}, \{1, 3, 4\}, \{2, 3, 4\}\}$ and $T = \{\{1,2, 3\}, \{1, 2, 4\}, \{1, 2, 5\}, \{1, 2, 6\}\}$. While $S \neq T$, these $k$-families both have $K_4$, the complete graph on four vertices, as their ridge graph.

\begin{defi}
A $k$-family $S$ is ridge-connected if its ridge graph is a connected graph. 
\end{defi}

Henceforth, we may drop the ``ridge" and simply say that a $k$-family is connected or disconnected. 

Putting all of this together, if $S$ is not ridge-connected, then you can decompose its set $R$ of co-dimension one faces (or ridges) into a disjoint union $S_1 \sqcup S_2 \sqcup \ldots \sqcup S_c$, where $c$ is the number of connected components of the ridge graph of $S$. Note that the vertex set of $S$ is the union of the vertex sets of the $S_i$ and $F$ is a face in $S$ if and only if it is face in one of the $S_i$. By Corollary \ref{cor:fan}, $L(S) = L(S_1) + L(S_2) + \ldots + L(S_c)$. The Laplacian spectra of $S$ is the disjoint union of the Laplacian spectra of the $S_i$.

Another operation that builds a new $k$-family from an existing one is to take the \emph{complement}. We define the complement of a $k$-family $S$, denoted by $S^c$, as follows:
$$S^c := \binom{[n]}{k} \backslash S = \{F \subseteq [1, 2, \ldots, n]: |F| = k, F \notin S\}.$$
$S^c$ is also a $k$-family and $(S^c)^c = S$. If $\lambda(S) = ( \lambda_1, \lambda_2, \ldots)$, then the Laplacian spectrum of $S^c$ is given by
$$\lambda_i(S^c) = n - \lambda_{\binom{n -1}{k - 1} + 1 - i}$$
as shown in \cite{DR}.

\begin{defi}
Suppose $S_1$ and $S_2$ are families on non-intersecting vertex sets $[n]$ and $[m]$, where $S_1$ and $S_2$ may potentially be of different dimensions. Their \emph{simplicial join}, denoted by $S = S_1 \star S_2$ is a $k$-family on vertex set $[m + n]$, where for each face $\{v_1, v_2, \ldots, v_i\} = F_1 \in S_1$ and each face $\{w_1, w_2, \ldots, w_j\} = F_2 \in S_2$, $S$ has a face $F = \{v_1, v_2, \ldots, v_i, w_1, w_2, \ldots, w_j\}$.
\end{defi}

Note that the dimension of $S$ is the sum of the dimensions of $S_1$ and $S_2$. When $S_2$ is an isolated vertex, the simplicial join of $S_1$ and $S_2$ is called the \emph{simplicial cone} of $S_1$. A result from \cite{DR} that will be useful in this work is how Laplacian spectra change under simplicial coning. Namely, given a $k$-family $S$ with Laplacian spectrum $\{\lambda_1, \lambda_2, \ldots, \lambda_n=0 \}$, the Laplacian spectrum of its cone $\hat{S}$ is $\{\lambda_1 +1, \lambda_2 +1, \ldots , \lambda_n + 1 = 1, \lambda_{n+1} = 0\}$.

\begin{eg}
Suppose $S_1$ is the $k$-family on $n-k$ isolated vertices and $S_2$ is the complete $k$-family on $[k]$. Then $S_1 \star S_2$ is the star $k$-family on $[n]$ nodes.

Suppose $S$ is the complete $k$-family on $[n]$. The cone of $S$ is the complete $k+1$-family on vertex set $[n + 1]$.
\end{eg}

\section{A Generalized Conjecture for Abstract Simplicial Complexes}\label{sec:gbc}

We can now introduce our conjectural Brouwer inequality for higher-dimensional Laplacian spectra. We note that this conjecture generalizes Brouwer's conjecture for graphs.

\vspace{2mm}

\noindent \textbf{Conjecture 3.} Given a simplicial complex (or $k$-family) $S$ and Laplacian spectrum $\lambda(S) = \left( \lambda_1 \geq \lambda_2 \geq \ldots \right)$, for all $t \in \mathbb{N}$,
\[ \sum_{i=1}^{t} \lambda_i \leq (k - 1) f_{k-1}+ \binom{t+k -1}{k}\]
where $f_{k-1}$ is the number of $k-1$-dimensional faces in $S$.

\vspace{2mm}

Without loss of generality, we assume that all $k$-families considered here are pure and connected. The former assumption is by construction of Laplacian matrices from the previous section. The latter is by application of Fan's theorem, which allows us to consider connected $k$-families separately.

We first show that this conjecture holds for the first and last partial sums.

\begin{lm}
The generalized family of inequalities stated in Conjecture \ref{red} holds for $t = 1, \binom{n}{k - 2}-1$, and $\binom{n}{k - 2}$.
\end{lm}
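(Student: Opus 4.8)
The plan is to treat the three values separately, handling the two largest partial sums together via a trace identity and the smallest via a spectral-plus-combinatorial estimate. Write $N$ for the number of Laplacian eigenvalues, which the excerpt records as $\binom{n}{k-1}$ (the size of $L(S)=\partial_{k-1}\partial_{k-1}^T$, indexed by the $(k-2)$-faces); I will prove the bound at $t=N$ and $t=N-1$, so the ``$\binom{n}{k-2}$'' in the statement should read $\binom{n}{k-1}$. The key identity is $\operatorname{tr} L(S)=\sum_i \lambda_i = k\,f_{k-1}$: each column of $\partial_{k-1}$ corresponds to a facet and carries exactly $k$ entries $\pm 1$ (one per $(k-2)$-subface), so $\operatorname{tr}(\partial_{k-1}\partial_{k-1}^T)=\|\partial_{k-1}\|_F^2 = k\,f_{k-1}$. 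Since $\lambda_N=0$, both $\sum_{i=1}^{N}\lambda_i$ and $\sum_{i=1}^{N-1}\lambda_i$ equal $k\,f_{k-1}$, so in each case the claimed inequality collapses to $f_{k-1}\le \binom{t+k-1}{k}$. As $f_{k-1}\le\binom{n}{k}$ and $t+k-1\ge N+k-2\ge n$ (using $N=\binom{n}{k-1}\ge n$ for $2\le k\le n$), this holds, finishing both large-$t$ cases. If $S$ has an incomplete $(k-2)$-skeleton, so that $L(S)$ is smaller than $\binom{n}{k-1}$, the partial sum is still at most the full trace and the same estimate applies verbatim.

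For $t=1$ the bound to beat is $(k-1)f_{k-1}+\binom{k}{k}=(k-1)f_{k-1}+1$, so I need $\lambda_1\le (k-1)f_{k-1}+1$, and I would obtain this from two inputs. First, a uniform spectral ceiling $\lambda_1(S)\le n$: the complement relation $\lambda_i(S^c)=n-\lambda_{\binom{n-1}{k-1}+1-i}(S)$ together with $\lambda_i(S^c)\ge 0$ forces $\lambda_j(S)\le n$ for every $j\le\binom{n-1}{k-1}$, in particular $\lambda_1(S)\le n$ (equivalently, $L(S)+L(S^c)=L(K)$ for the complete $k$-family $K$, whose top eigenvalue is $n$, and $L(S^c)\succeq 0$). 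Second, a combinatorial floor on the number of facets coming from the standing connectedness assumption: in a ridge-connected $k$-family, a spanning tree of the ridge graph lists the facets in an order where the first covers $k$ vertices and each subsequent facet, sharing a ridge (codimension-one intersection, i.e.\ $k-1$ common vertices) with an earlier one, introduces at most one new vertex; hence $n\le k+(f_{k-1}-1)$, i.e.\ $f_{k-1}\ge n-k+1$.

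It then remains to combine these: $(k-1)f_{k-1}+1\ge (k-1)(n-k+1)+1$, and since $(k-1)(n-k+1)-(n-1)=(k-2)(n-k)\ge 0$ for $n\ge k\ge 2$, we get $(k-1)f_{k-1}+1\ge n\ge\lambda_1$, as required (with equality throughout when $k=2$, recovering the graph argument $\lambda_1\le n\le e+1$). The two large partial sums are routine once the trace identity and $\lambda_N=0$ are in hand; the only real content is the $t=1$ step, and within it the main obstacle is marrying the spectral ceiling $\lambda_1\le n$ with the combinatorial floor $f_{k-1}\ge n-k+1$. The former relies on the complement spectrum already available in the excerpt, while the latter requires care that ``ridge-connected'' genuinely limits the growth to one new vertex per facet along a spanning tree, which is precisely where the codimension-one intersection in the definition of the ridge graph is used.
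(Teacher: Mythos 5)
Your proposal is correct and follows essentially the same route as the paper: for $t=1$ it combines the spectral bound $\lambda_1 \le n$ with the connectivity bound $f_{k-1} \ge n-k+1$, and for the two largest partial sums it reduces, via the trace identity $\sum_i \lambda_i = k\,f_{k-1}$ and the vanishing of the smallest eigenvalue, to the count $f_{k-1} \le \binom{t+k-1}{k}$. Your reading of the index as $\binom{n}{k-1}$ (the number of $(k-2)$-dimensional faces) is the intended one, and the details you fill in --- the spanning-tree argument for $f_{k-1}\ge n-k+1$, the derivation of $\lambda_1\le n$ from the complement spectrum, and the algebra $(k-1)(n-k+1)-(n-1)=(k-2)(n-k)\ge 0$ --- are sound and slightly cleaner than the paper's sketch (in particular, your large-$t$ estimate does not need to exclude the case $k=2$).
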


\begin{proof}

For $t = 1$, we want to show
$$\lambda_1 \leq (k-1)f_{k-1} + 1.$$
Since $f_{k - 1}\geq n - k + 1$ by the connectedness assumption and $\lambda_1 \leq n$, as shown in \cite{BH}, we have:
\begin{align*}
\lambda_1 &\leq (k-1)f_{k-1} + 1\\
n & \leq (k - 1)(n - k + 1) + 1\\
n & \leq nk - k^2 + 2k - n
\end{align*}
This above inequality gives us $k \leq n$, which holds since we assume that $S$ is connected. 

To prove the higher partial sums, we make use of the fact from \cite{BH} that $\sum_i \lambda_i = k f_{k-1}(S)$ for all $k$-families where the smallest eigenvalue is $0$. Since the smallest eigenvalue is $0$, we only need to prove this for $t = \binom{n}{k - 2} -1$.
\begin{align*}
\sum_{i = 1}^{\binom{n}{k - 2} -1} \lambda_i & \leq (k - 1) f_{k -1} + \binom{t + k - 1}{k}\\
k f_{k - 1} & \leq (k - 1) f_{k -1} + \binom{t + k - 1}{k}\\
f_{k - 1} & \leq \binom{t + k - 1}{k}
\end{align*}
To show that this last inequality holds, we first note that we can assume $k > 2$. Namely, the case for $k = 2$ corresponds to the case of graphs, for which Brouwer's conjecture has already been shown to hold for the last partial sums. Therefore, $\binom{n}{k - 2} -1 \geq n - 1$. Therefore, the right hand side gives us $\binom{n - 2 + k}{k}$ whereas the left hand side is bounded above by $\binom{n}{k}$.

\end{proof}

In the last subsection, we will show that this conjectural family of inequalities also holds for $ t = 2$ as well as for higher partial sums under certain dimension and matching number constraints. Before doing so, we show that it holds for all $t$ for some special classes of $k$-families.

\subsection{Shifted Simplicial Complexes}

We define a special class of $k$-families, which generalize threshold graphs.

\begin{defi}
$S$ is a \emph{shifted simplicial complex} (or shifted $k$-family) if there exists a labeling of its vertices by $[n]$ such that for any face $\{v_1, v_2, \ldots, v_d\}$, replacing any of the $v_i$ by a label $v_j$ such that $j < i$ results in a face that is also in $S$.
\end{defi}

We can also define this class using order ideals.

\begin{defi}
A non-empty subset $P'$ of a partially ordered set $P$ is called an \emph{order ideal} if for all $x \in P'$, $y \leq x$ implies that $y \in P'$. Moreover, for every $x, y \in P'$, there is some element $z \in P'$ such that $x \leq z$ and $y \leq z.$

\end{defi}

Consider a poset $P$ on $n$ integers such that a string $(x_1 < x_2 < \ldots < x_k)$ is said to be less than another string $(y_1 < y_2 < \ldots y_k)$ if $x_i \leq y_i$ for all $i$. Shifted $k$-families on $n$ vertices of degree $k$ are precisely the order ideals of $P$. This ordering is sometimes known as the \emph{Gale ordering}.

Shifted $k$-families are a combinatorial class that have interesting algebraic and topological properties. It is known that shifted $k$-families satisfy the Duval-Reiner conjecture and, in fact, a stronger result is shown to hold \cite{DR}:

\begin{thm}[Duval-Reiner]
Let $S$ be a shifted $k$-family with conjugate partition $d^T = (d_1^T, d_2^T, \ldots)$ and Laplacian spectrum $\lambda = \{ \lambda_i, \forall i \in [n]\}$. Then,
$$\sum_{i = 1}^t \lambda_i = \sum_{i = 1}^t d_i^T.$$
\end{thm}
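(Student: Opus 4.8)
The plan is to prove the stronger \emph{Laplacian integrality} statement by induction on the number of vertices $n$, exploiting the fact that a shifted $k$-family is a \emph{near-cone} with apex vertex $1$. Concretely, if the vertices are labelled in Gale order, then the Definition of a shifted complex, specialized to the single label $1$, says that whenever $F \in S$ with $1 \notin F$ and $w \in F$, the exchange $(F \setminus \{w\}) \cup \{1\}$ is again a face. The first step is to record this near-cone property, and to observe that both the deletion $\mathrm{del}_1(S) = \{F \in S : 1 \notin F\}$ and the link $\mathrm{lk}_1(S) = \{G : 1 \notin G,\ G \cup \{1\} \in S\}$ are themselves shifted families on fewer vertices (a shifted $k$-family and a shifted $(k-1)$-family, respectively), so that the inductive hypothesis applies to each.

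The heart of the argument is a spectral recursion for near-cones that refines the simplicial coning formula already recorded in this paper. The plan is to split the faces in the relevant dimension according to whether they contain the apex $1$; this induces a block decomposition of the boundary matrix $\partial_{k-1}$, and hence of $L(S) = \partial_{k-1}\partial_{k-1}^T$, with the apex-free block governed by $\mathrm{del}_1(S)$ and the apex-containing block governed by $\mathrm{lk}_1(S)$. I would then show that the near-cone exchange condition is exactly what trivializes the off-diagonal coupling, after an explicit change of basis, yielding an \emph{exact} decomposition (schematically)
\[ \mathrm{spec}\,L(S) \;=\; \mathrm{spec}\,L(\mathrm{del}_1 S)\,\sqcup\,\bigl(\mathrm{spec}\,L(\mathrm{lk}_1 S) + 1\bigr), \]
where the link is treated in its own dimension, the requisite zero eigenvalues are accounted for, and the $+1$ is precisely the shift appearing in the coning lemma. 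When $S$ is a genuine cone this specializes to that lemma; the substance is that the weaker near-cone hypothesis still forces exact integer eigenvalues rather than mere interlacing.

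In parallel I would track the conjugate partition and verify that it satisfies the identical recursion: deleting the apex removes one vertex's worth of degree data and shifts the remaining degrees, so that $d^T(S)$ is assembled from $d^T(\mathrm{del}_1 S)$ and $d^T(\mathrm{lk}_1 S)$ by the same ``disjoint union with a $+1$ shift.'' Matching the two recursions term by term, and invoking the inductive equalities $\sum_{i=1}^t \lambda_i(\mathrm{del}_1 S) = \sum_{i=1}^t d_i^T(\mathrm{del}_1 S)$ and its analogue for the link, then delivers $\sum_{i=1}^t \lambda_i = \sum_{i=1}^t d_i^T$ for every $t$. The base case is a single facet (or the empty complex), which is immediate.

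The step I expect to be the main obstacle is establishing the exact spectral recursion for near-cones. Disjoint unions (via Fan's theorem) and genuine cones split $L$ into a direct sum outright, but for a near-cone the off-diagonal block of $\partial_{k-1}$ is genuinely nonzero, and one must produce the precise change of basis — or, equivalently, an explicit family of eigenvectors built from the link and deletion eigenvectors and corrected along the apex — that removes the coupling. Showing that the combinatorial exchange condition is exactly the hypothesis that makes this correction land on integer eigenvalues, with the correct dimensional and zero-eigenvalue bookkeeping, is the crux; once it is in place, the conjugate-partition accounting and the induction are routine.
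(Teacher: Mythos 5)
First, a point of comparison: the paper itself does not prove this theorem --- it quotes it from the cited work of Duval and Reiner --- so your proposal must be judged against the known proof in the literature. Your inductive skeleton (apex vertex $1$, both $\mathrm{del}_1(S)$ and $\mathrm{lk}_1(S)$ shifted on fewer vertices, match a spectral recursion against a conjugate-partition recursion) is indeed the skeleton of that known argument. But the step you yourself flag as the crux is not merely ``the main obstacle'': it is false at the level of generality you propose. The near-cone exchange condition does \emph{not} trivialize the off-diagonal coupling, and the exact decomposition $\mathrm{spec}\,L(S) = \mathrm{spec}\,L(\mathrm{del}_1 S)\sqcup\bigl(\mathrm{spec}\,L(\mathrm{lk}_1 S)+1\bigr)$ fails for near-cones. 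Concrete counterexample with $k=2$: let $S$ be the graph on vertices $\{1,\ldots,5\}$ with edges $12,13,14,15,23,45$ (the join of vertex $1$ with two disjoint edges). This is a near-cone with apex $1$ --- every face $F$ with $1\notin F$ satisfies the exchange condition --- but it is not shifted, since $\{2,3,4,5\}$ induces two disjoint edges. Its Laplacian spectrum is $\{5,3,3,1,0\}$, while $\mathrm{del}_1 S$ (two disjoint edges) has spectrum $\{2,2,0,0\}$ and $\mathrm{lk}_1 S$ is four isolated vertices; no assembly of the form ``$\mathrm{spec}(\mathrm{del})\cup(\mathrm{spec}(\mathrm{lk})+1)$,'' under any zero-padding convention, produces $\{5,3,3,1,0\}$: the eigenvalue $2$ of the deletion does not survive unshifted, and the eigenvalues $3,3,1$ belong to neither list. (Consistently, the theorem's conclusion also fails here: $d^T=(5,5,1,1,0)\neq\lambda$, which shows that \emph{some} hypothesis beyond near-cone is indispensable --- if both of your claimed recursions held for near-cones, the theorem would hold for all near-cones, which it does not.)

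There is also a bookkeeping problem visible even for genuine cones. The link lives one dimension lower, so one must decide which Laplacian of $\mathrm{lk}_1 S$ enters the recursion and how many zero eigenvalues to carry. With the paper's convention $L=\partial_{k-1}\partial_{k-1}^T$, take $S$ to be the star graph on $3$ vertices (the cone over the $1$-family $\{\{2\},\{3\}\}$): the link's Laplacian is the $1\times 1$ matrix $[2]$, so your formula yields $\{3\}\cup\{0,0\}=\{3,0,0\}$ and misses the eigenvalue $1$ of the star. Repairing both issues is exactly where the content of the Duval--Reiner theorem lies: the correct spectral recursion (with careful up-down/down-up and zero-count bookkeeping) is a theorem that holds for \emph{shifted} complexes, proved using shiftedness applied recursively to the link and deletion and their interaction, not a linear-algebra consequence of the near-cone property. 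As written, your argument reduces the theorem to a lemma that is false in the stated generality and, in its corrected form, is essentially equivalent to the theorem itself.
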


In this section, we show that shifted $k$-families satisfy Conjecture \ref{red} as well.

\begin{thm}\label{thm:shifted}
Shifted $k$-families satisfy the family of inequalities given in Conjecture \ref{red}.
\end{thm}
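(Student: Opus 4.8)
The plan is to combine the Duval--Reiner theorem, which gives an exact equality for shifted $k$-families, with the algebraic bound relating the conjugate partition to the binomial quantities in Conjecture \ref{red}. Since Duval--Reiner tells us that $\sum_{i=1}^t \lambda_i = \sum_{i=1}^t d_i^T$ for a shifted $k$-family, it suffices to prove the purely combinatorial inequality
\[
\sum_{i=1}^t d_i^T \leq (k-1) f_{k-1} + \binom{t+k-1}{k}
\]
for all $t \in \mathbb{N}$. This reduces a spectral statement to a counting statement about the conjugate partition of a shifted complex, which is entirely combinatorial and should be tractable.

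First I would unpack the two sides in terms of faces. Recall that $d_i^T$ counts the vertices of degree at least $i$, so $\sum_{i=1}^t d_i^T = \sum_{v} \min(\deg(v), t)$ counts, for each vertex, its degree capped at $t$. On the other side, $k f_{k-1} = \sum_v \deg(v)$ since each of the $f_{k-1}$ facets contributes $k$ to the total degree. Rewriting the target inequality as $\sum_v \min(\deg(v), t) \leq (k-1) f_{k-1} + \binom{t+k-1}{k}$ and substituting $(k-1)f_{k-1} = \sum_v \deg(v) - f_{k-1}$, the inequality becomes
\[
\sum_{v} \bigl( \min(\deg(v), t) - \deg(v) \bigr) \leq \binom{t+k-1}{k} - f_{k-1},
\]
i.e. $f_{k-1} - \sum_{v} \max(\deg(v) - t, 0) \leq \binom{t+k-1}{k}$. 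So the claim amounts to bounding the number of facets after subtracting an excess-degree term by the binomial $\binom{t+k-1}{k}$.

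The key structural step I would exploit is the shiftedness: because $S$ is an order ideal under the Gale order, its facets are the lexicographically/Gale-smallest $k$-subsets, and the high-degree vertices are precisely the small-labeled ones $1, 2, \ldots$. I would argue that the facets all of whose vertices lie in $\{1, \dots, t+k-1\}$ number at most $\binom{t+k-1}{k}$ trivially, and that every facet containing a vertex labeled greater than $t+k-1$ must, by the shifting property, force several small-labeled vertices to have degree exceeding $t$, so that such facets are ``paid for'' by the subtracted excess-degree term $\sum_v \max(\deg(v)-t, 0)$. Concretely, I expect to set up a charging/injection argument: assign each facet not contained in $\{1,\dots,t+k-1\}$ to units of excess degree among low-labeled vertices, using that shiftedness guarantees these vertices are saturated.

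The main obstacle I anticipate is making the charging argument tight and bijective rather than merely order-of-magnitude. The binomial $\binom{t+k-1}{k}$ is exactly the count of $k$-subsets of a $(t+k-1)$-element set, so the bound is sharp on the complete $k$-family truncated appropriately, and any slack in the charging would break the inequality for those extremal cases. I would therefore verify the argument against the complete $k$-family and the star $k$-family (whose $d^T$ vectors are computed in the examples above) to calibrate the charging, and I would likely need a careful induction on $t$ or on the number of vertices of degree exceeding $t$, peeling off the highest-labeled saturated vertex and reducing to a shifted complex on fewer vertices. An alternative, possibly cleaner route would be to bound $\sum_{i=1}^t d_i^T$ directly by observing that the Gale-order structure forces $d_i^T \le \binom{t+k-2}{k-1}$-type estimates on the individual conjugate-partition entries and then summing a telescoping binomial identity to recover $\binom{t+k-1}{k}$; I would try this identity-based approach in parallel, since it sidesteps the delicate injection entirely.
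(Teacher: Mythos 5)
Your opening reduction is sound, and it is in fact the same first move the paper makes: by the Duval--Reiner equality $\sum_{i=1}^t \lambda_i = \sum_{i=1}^t d_i^T$ for shifted families, the theorem is equivalent to the combinatorial inequality $\sum_v \min(\deg(v),t) \leq (k-1)f_{k-1} + \binom{t+k-1}{k}$, i.e.\ $f_{k-1} \leq \binom{t+k-1}{k} + \sum_v \max(\deg(v)-t,0)$, and you correctly isolate the key structural fact: if a facet $F=\{v_1<\dots<v_k\}$ has $v_k \geq t+k$, shiftedness forces each of $v_1,\dots,v_{k-1}$ to have degree at least $t+1$. But your proof stops exactly where the real work begins, and the deferred charging step is a genuine gap, not a routine verification. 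The problem is that the saturated vertices are \emph{shared}: each ``bad'' facet (one meeting $\{t+k, t+k+1,\dots\}$) supplies $k-1$ saturated vertices, but a saturated vertex $v$ lies in up to $\deg(v)$ bad facets while carrying only $\deg(v)-t$ units of excess. The naive double count gives $(k-1)|B| \leq \sum_{\text{saturated } v} \deg(v)$, where $B$ is the set of bad facets, and comparing this with the available excess $\sum_{\text{saturated } v} (\deg(v)-t)$ closes the inequality only under a side condition of the form $t \leq k-2$; it gives nothing at all for $k=2$, where the statement must recover the threshold-graph case. So ``several saturated vertices per bad facet'' is not enough; you need each bad facet to consume a distinct unit of excess, and that injection is the actual content of the theorem. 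Your parallel identity-based route is worse off: no per-entry bound such as $d_i^T \leq \binom{t+k-2}{k-1}$ can hold, since for the complete $k$-family on $n$ vertices one has $d_i^T = n$ for all $i \leq \binom{n-1}{k-1}$; any estimate that ignores the $(k-1)f_{k-1}$ term is simply false.

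The way to close the gap --- and it is what the paper does --- is to run your charging dynamically rather than statically: take a minimal counterexample and delete a \emph{Gale-maximal} facet $F$ with $v_k \geq t+k$ (Gale-maximality keeps $S \backslash F$ shifted, and pushing any bad facet up the Gale order only increases its top vertex, so such a maximal bad facet exists whenever any bad facet does). Deleting $F$ drops the right side of the conjecture by exactly $k-1$, while the left side $\sum_v \min(\deg(v),t)$ drops by at most $1$: the $k-1$ saturated vertices of $F$ still have degree at least $t$ after the deletion, so their capped contributions stay equal to $t$, and only $v_k$ can lose. Since $1 \leq k-1$, the smaller complex would also be a counterexample, a contradiction; each deletion retires one bad facet against at least one unit of excess, which is precisely the injection you were missing. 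What remains is the case with no bad facets (all facets inside $[t+k-1]$, the paper phrases it as fewer than $t+k$ vertices), where $f_{k-1} \leq \binom{t+k-1}{k}$ and $\sum_{i=1}^t \lambda_i \leq \sum_i \lambda_i = k f_{k-1}$ finish immediately --- this is your ``trivial'' case and the paper's Case 2. In short, your skeleton (Duval--Reiner reduction, the split of facets by whether they meet labels beyond $t+k-1$, the saturation observation) coincides with the paper's, but the charging you postpone is where the proof lives, and the facet-by-facet peeling induction is what makes it go through.
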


\begin{proof}
We will do this via an induction on the cardinality of the order ideal in the Gale ordering that indexes the shifted $k$-family by removing Gale maximal $k$-subsets one at a time.

Suppose that Conjecture 3 is not satisfied for some $t$ and a $k$-family $\tilde{S}$. We choose a minimal counterexample $S \subseteq \tilde{S}$ for which this conjecture does not hold. That is, if we remove a maximal facet $F$ of $S$, then the resulting $k$-family $S \backslash F$ satisfies Conjecture 3. Note, $F$ is assumed to be maximal in the ideal within the Gale order as defined above since, if it is not, then the new $k$-family $S \backslash F$ will not be shifted. We have two cases:

\begin{itemize}
\item[Case 1.] S has at least $t + k$ vertices.

There exists a maximal face $F = \{v_1, v_2, \ldots, v_k\}$ where $v_1 \geq v_2 \ldots \geq v_k$ such that $v_k$ is at least $t + k$.
%TODO: do we want $v_k$ to be at least $t$? Also, how do we know that such a face exists
To see this, choose any face such that $v_k \geq t + k$. If it is maximal, then we are done. If it is not, then we can increase the indices by going up the partially ordered set until we reach a maximal element. Relabel this face $ F = \{v_1, v_2, \ldots, v_k\}$. We still note $v_k \geq t + k$ since going up the partially ordered set can only increase the indices of the vertices.

Since $S$ is shifted, it follows that it contains $F' = \{v_1, v_2, \ldots, v_{k -1}, v_{k'}\}$ for all $v_{k'} < v_k$ such that $F'$ is still a $k$-family. Thus, each of $\{v_1, v_2, \ldots, v_{k-1}\}$ has degree at least $t+ 1$.

We delete $F$ from $k$-family to get $S \backslash F$, reducing the degree of $\{v_1, v_2, \ldots, v_{k - 1}\}$ by one each. Thus, when we delete $F$, the left hand side of Conjecture 3 goes down by less than $k - 1$ while the right hand side goes down by exactly $k - 1$ by the Duval-Reiner theorem. Therefore, if $S \backslash F$ satisfies Conjecture 3, then so does $S$.

\item[Case 2.] $S$ has fewer than $t + k$ vertices. Then, we have at most $\binom{t+k - 1}{k}$ faces. Thus,
$$k f_{k - 1} \leq \binom{t + k - 1}{k} + (k - 1) f_{k - 1}.$$

But, $\sum_{i = 1}^t \lambda_i \leq k f_{k - 1}$, so the desired inequality holds.

\end{itemize}
\end{proof}

\subsection{Simplicial Trees}

Returning to the case of Brouwer's conjecture for graphs, it is known that the following stronger result holds for trees, as shown in \cite{H}:
$$ \sum_{i = 1}^t \lambda_i \leq e + 2t - 1.$$

We generalize this above family of inequalities and show that this result holds for simplicial trees. Before presenting this result, we first define simplicial trees as introduced by Faridi, \cite{F}.

\begin{defi}
A facet $F$ of a $k$-family is called a \emph{leaf} if either $F$ is the only facet of the $k$-family $T$ or $ F \cap T \backslash F \subseteq G$ for some facet $G \in T \backslash F$.
\end{defi}

\begin{defi}
A connected $k$-family $T$ is a \emph{tree} if every $k$-subfamily of $T$ has a leaf.
\end{defi}

Simplicial forests are defined by dropping the connectedness assumption above.

\begin{thm}
Let $T$ be a simplicial tree on $n$ vertices with Laplacian spectrum $( \lambda_1 \geq \lambda_2 \geq \ldots)$. Then,
$$\sum_{i = 1}^t \lambda_i \leq (k - 1) f_{k - 1} + k t - k + 1.$$
\end{thm}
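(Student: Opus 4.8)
The plan is to avoid the rank-one-perturbation/interlacing route entirely: small examples (e.g. two $2$-simplices sharing a ridge, where the top-$2$ sum jumps by $3>k-1$) show that removing a leaf can change $\sum_{i=1}^t\lambda_i$ by more than the $k-1$ that a per-facet induction would need, so the naive leaf induction fails even though the final bound has slack. Instead I would pass to the dual of the Laplacian. Since $L(T)=\partial_{k-1}\partial_{k-1}^T$ and $\partial_{k-1}^T\partial_{k-1}$ share the same nonzero spectrum, and since two facets contribute an off-diagonal entry $\pm1$ to $\partial_{k-1}^T\partial_{k-1}$ exactly when they meet in codimension one and $0$ otherwise, I would write
$$\partial_{k-1}^T\partial_{k-1}=k\,I_{m}+A_{\mathrm{sr}},$$
where $m=f_{k-1}$ and $A_{\mathrm{sr}}$ is the signed adjacency matrix of the ridge graph $\mathcal R$ of $T$. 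The first step is to show, by peeling leaves (every subfamily of a tree has a leaf, and a leaf always carries a private ridge through one of its degree-one vertices), that $\partial_{k-1}$ has full column rank. Then $L(T)$ has exactly $m$ positive eigenvalues, and writing $\mu_1\ge\cdots\ge\mu_m$ for the spectrum of $A_{\mathrm{sr}}$ these are precisely $\lambda_i=k+\mu_i$. For $t\ge m$ the claim follows from the trace identity $\sum_i\lambda_i=k f_{k-1}$ (the last-partial-sum case), so I may assume $t<m$; there $\sum_{i=1}^t\lambda_i=kt+\sum_{i=1}^t\mu_i$, and the theorem reduces to $\sum_{i=1}^t\mu_i\le (k-1)(m-1)$ for every $t$.

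Because $A_{\mathrm{sr}}$ has zero diagonal, $\sum_i\mu_i=0$, so $\sum_{i=1}^t\mu_i$ is maximized when $t$ equals the number of positive eigenvalues, and it suffices to bound $\Sigma^+:=\sum_{\mu_i>0}\mu_i=\tfrac12\|A_{\mathrm{sr}}\|_*$. Here I would exploit that the edges of $\mathcal R$ decompose according to which ridge the two facets share: each codimension-one face $\rho$ is contained in $d_\rho$ facets, those facets form a clique in $\mathcal R$, and distinct ridges contribute disjoint edge sets, so $A_{\mathrm{sr}}=\sum_\rho A_\rho$ with each $A_\rho$ supported on a single clique. Each $A_\rho$ is a signed complete graph, $A_\rho=D_\rho(J-I)D_\rho$ for a diagonal sign matrix $D_\rho$, hence is similar to $J-I$ and satisfies $\|A_\rho\|_*=2(d_\rho-1)$. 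The triangle inequality for the nuclear norm then gives
$$\Sigma^+\le\sum_\rho (d_\rho-1)=\sum_\rho d_\rho - f_{k-2}=k f_{k-1}-f_{k-2},$$
using $\sum_\rho d_\rho=\sum_{F} k = k f_{k-1}$.

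To finish I would prove the purely combinatorial inequality $f_{k-2}\ge f_{k-1}+(k-1)$ for simplicial forests, again by leaf removal: a single simplex has $f_{k-2}-f_{k-1}=k-1$, and attaching a leaf adds one facet while introducing at least its $k-1$ private ridges, so the gap never decreases. Substituting yields $\Sigma^+\le k f_{k-1}-(f_{k-1}+k-1)=(k-1)(m-1)$, and therefore $\sum_{i=1}^t\lambda_i\le kt+(k-1)(m-1)=(k-1)f_{k-1}+kt-k+1$, exactly the claimed bound (and it is tight, e.g. for star $k$-families).

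The steps I expect to require the most care are the two leaf-peeling lemmas—full column rank of $\partial_{k-1}$ and the $f$-vector inequality $f_{k-2}\ge f_{k-1}+k-1$—rather than the linear algebra. Faridi's definition only guarantees $F\cap(T\setminus F)\subseteq G$ for some facet $G$, so I must handle non-generic leaves whose intersection with the rest of the complex has codimension larger than one (such a leaf contributes more than $k-1$ new ridges and more than one independent private direction, which only helps both inequalities) and verify that removing a leaf returns a complex to which the induction applies, including the disconnected case where Corollary \ref{cor:fan} lets me treat components separately. By contrast, the bundle decomposition $A_{\mathrm{sr}}=\sum_\rho A_\rho$ and the identity $\|A_\rho\|_*=2(d_\rho-1)$ are orientation-independent, since reorienting a facet conjugates $A_{\mathrm{sr}}$ by a diagonal sign matrix and leaves every $\mu_i$ unchanged.
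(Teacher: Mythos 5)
Your proposal is correct, and it takes a genuinely different route from the paper. The paper proves this theorem by induction on $n$ using Fan's theorem (Corollary \ref{cor:fan}): the base case is the star $k$-family, whose spectrum $(n,k-1,\ldots,k-1,0)$ is quoted; for a non-star tree it locates a \emph{non-leaf} facet whose removal splits $T$ into two nonempty pieces $T_1$ and $T_2$, writes $L(T)$ as dominated by $L(T_1)+L(T_2)+L(K_k)$, and applies subadditivity of partial spectral sums together with the inductive hypothesis, splitting $t=t_1+t_2$ according to how many of the top $t$ eigenvalues come from each piece. (Your opening observation that naive leaf-by-leaf induction fails --- the two-triangle example where the top-$2$ sum jumps by $k$, not $k-1$ --- is exactly why the paper's induction cuts at a non-leaf facet instead of peeling leaves.) Your argument instead works entirely on the facet side: the identity $\partial_{k-1}^T\partial_{k-1}=kI+A_{\mathrm{sr}}$, the edge-disjoint decomposition of $A_{\mathrm{sr}}$ into signed cliques $D_\rho(J-I)D_\rho$ indexed by ridges, and the nuclear-norm triangle inequality reduce the theorem to the two combinatorial leaf-peeling lemmas you flag, namely full column rank of $\partial_{k-1}$ (equivalently, vanishing top homology of forests) and $f_{k-2}\ge f_{k-1}+k-1$; both do go through by the private-vertex/private-ridge argument you sketch, since a leaf $F\ne G$ with $F\cap(T\setminus F)\subseteq G$ must contain a vertex in no other facet, hence $k-1$ ridges in no other facet. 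What your approach buys: it proves the stronger uniform statement $\sum_{i=1}^t\lambda_i\le kt+\Sigma^+\le kt+(k-1)(f_{k-1}-1)$ for all $t$ simultaneously through the single quantity $\Sigma^+$; it applies verbatim to forests with no appeal to Fan's theorem; and it isolates reusable structure (top-homology vanishing for Faridi forests, the signed ridge-clique decomposition) that is close in spirit to the matching-number machinery of Section \ref{sec:hps} and might feed into it. What the paper's approach buys: given the tools already developed there (Fan's theorem, star spectra, the coning rule), it is much shorter and stays inside the same inductive framework used for the paper's other results, at the cost of yielding only the stated inequality rather than the sharper trace-norm bound.
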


\begin{proof}

We prove this by induction on $n$.
We first assert that if $T$ is a star $k$-family, which is known to have Laplacian spectrum $( n, k - 1, k - 1, \ldots, k -1, 0 )$, then it satisfies the desired inequality above since
$$ n + (t - 1)(k - 1) \leq (k - 1)(n - k + 1) + kt - k + 1$$
holds for a given $t$ whenever $n \geq k$.

Suppose that $T$ is not a star $k$-family. Then, it must have at least three facets. Given such a $T$, there exists a facet whose removal results in a simplicial forest with two components $T_1$ and $T_2$ which have at least one facet each. To see this, first note that there must exist a facet $F$ that is not a leaf since $T$ is not a star $k$-family. Suppose that this facet corresponds to the vertex $v_f$ in the ridge graph of $T$, which we denote by $G_T$. Now, suppose that $v_f$ has neighboring vertices $\{v_1, v_2, \ldots, v_m\} \in V(G_T)$. Now, consider removing $F$ from $T$, and correspondingly $v_f$ from $G_T$. Then, there exists $v_i, v_j \in \{v_1, v_2, \ldots, v_m\}$ such that no $v_i - v_j$ path in $G_T$ that does not contain $v_f$. If there was such a path $C = \{v_i, \ldots, v_j\}$, the $k$-subfamily corresponding to $C$, which we denote b $S_C$, would not contain a leaf, contradicting the assumption that $T$ is a simplicial tree.

Now, given the $t$ largest eigenvalues of $T_1 \sqcup T_2$, say $t_1$ of them come from $T_1$ and $t_2$ of them come from $T_2$. We have one of two cases. Note, for each case, $K_k$ denotes a $k$-family with just one facet $\{v_1, v_2, \ldots, v_k\}$.

\begin{itemize}
\item[Case 1.] One of the $t_i$, say $t_2$, is 0. Then, by Corollary 16, and the inductive hypothesis,
\begin{align*}
\sum_{i = 1}^t \lambda_i (T) &\leq \sum_{i = 1}^t \lambda_i(T_1 \cup T_2 \cup K_k)\\
& \leq \sum_{i = 1}^t \lambda_{i} (T_1) + \sum_{i = 1}^t \lambda_i (K_k)\\
& \leq (k - 1) f_{k - 1}(T_1) + k t_1 - k + 1 + k \\
& \leq (k - 1) f_{k - 1}(T_1) + k t_1 +1
\end{align*}
\noindent The last line implies the desired inequality since:
\begin{align*}
(k - 1) f_{k - 1}(T_1) + k t_1 +1 & \leq (k-1) f_{k - 1} + kt - k + 1\\
(k - 1) f_{k - 1}(T_1) + k t_1 & \leq (k-1) f_{k - 1}(T_1) + (k-1)f_{k-1}(T_2) + (k - 1) + kt - k \\
k t_1 & \leq (k-1)f_{k-1}(T_2) + kt - 1
\end{align*}
\noindent This last inequality holds since $f_{k-1}(T_2) \geq 1$ and $k \geq 2$ by assumption.

\item[Case 2.] $t_1, t_2 \neq 0$. Then, again by Corollary 16 and the inductive hypothesis,
\begin{align*}
\sum_{i = 1}^t \lambda_i (T) &\leq \sum_{i = 1}^t \lambda_i(T_1 \cup T_2 \cup K_k)\\
& \leq \sum_{i = 1}^t \lambda_i(T_1) + \sum_{i = 1}^t \lambda_i(T_2) + \sum_{i = 1}^t \lambda_i(K_k)\\
& \leq (k - 1) f_{k - 1}(T_1) + (k - 1) f_{k - 1}(T_2) + k t_1 + k t_2 - k + 2\\
& \leq (k - 1) f_{k - 1}(T) + k t - k + 1
\end{align*}
Note, the last line uses the observation that $f_{k - 1}(T_1) + f_{k - 1}(T_2) + 1 = f_{k - 1}(T)$ and $t_1 + t_2 = t$.
\end{itemize}
\end{proof}

\subsection{Higher Partial Sums}\label{sec:hps}

Next, we present two main results for higher partial sums of Conjecture \ref{red}. We have already noted that the conjecture holds for the first and last partial sums. Here, we note that the conjecture holds for all $t$ for $k$-families such that $k > t$ and whose matching number is greater than $t$. We also show that the conjecture holds for all $k$-families for $ t = 2$, which states:
\[ \lambda_1(S) + \lambda_2(S) \leq (k - 1)f_{k - 1} (S) + k + 1.\]

At the heart of these results is a structural argument that heavily relies on matching in $k$-families. Recall that a matching in a graph is a set of edges such that no two edges share a vertex. (i.e., the edges are independent.) The matching number of a graph is the size of a matching that contains the largest possible number of edges. We generalize this notion of matching numbers for $k$-families of any dimension below.

\begin{defi}
Given a $k$-family $S$ with a ridge graph $G_S$, a \emph{matching} is a set of facets in $S$ corresponding to an independent set in $G_S$. The \emph{matching number} of a $k$-family, denoted by $M_S$, is the size of the maximal independent set of its ridge graph.
\end{defi}

Note that this definition generalizes the notion of matching in graphs.

This notion of matching, along with the following observation on forbidden $k$-families, plays a key role in proving higher partial sums for Conjecture \ref{red}.

\begin{lm}\label{lm:forbidden}
If $k$-family $S$ is a minimum-cardinality counter example to the $t^{\text{th}}$ partial sum inequality in Conjecture 3, then every $k$-subfamily $H$ in $S$ must satisfy for $t > 1$:
$$\sum_{i = 1}^t \lambda_i(H) >(k - 1) f_{k - 1}(H).$$
\end{lm}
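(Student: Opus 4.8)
The plan is to argue by contradiction, exploiting the additivity of the Laplacian over facets together with Fan's theorem (Corollary~\ref{cor:fan}) and the minimality of $S$. Suppose, contrary to the claim, that some $k$-subfamily $H \subseteq S$ (which necessarily has at least one facet, since $H$ has dimension $k-1$) satisfies $\sum_{i=1}^t \lambda_i(H) \le (k-1) f_{k-1}(H)$. I will split the facets of $S$ into those belonging to $H$ and the rest, and show that $S$ itself must then obey the $t^{\text{th}}$ inequality of Conjecture~3, contradicting that $S$ is a counterexample.

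The key structural observation is that the Laplacian is additive over facets. Writing $\partial_{k-1}$ as the matrix whose columns $\partial_F$ are indexed by the facets $F$ of $S$, we have $L(S) = \partial_{k-1}\partial_{k-1}^T = \sum_F \partial_F \partial_F^T$. Let $S'$ be the $k$-family generated by the facets of $S$ that are not facets of $H$. Indexing all three Laplacians by the $(k-2)$-faces of $S$ (padding with zero rows and columns wherever a $(k-2)$-face is absent, which only introduces extra zero eigenvalues and hence leaves $\sum_{i=1}^t \lambda_i$ unchanged), we obtain $L(S) = L(H) + L(S')$. Since $H$ and $S'$ share no $k$-subsets, this is exactly the setting of Corollary~\ref{cor:fan}, giving
\[
\sum_{i=1}^t \lambda_i(S) \le \sum_{i=1}^t \lambda_i(H) + \sum_{i=1}^t \lambda_i(S').
\]

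Now $S'$ has strictly fewer facets than $S$ (as $H$ has at least one facet), so by the minimality of $S$ as a counterexample, $S'$ satisfies Conjecture~3 for this $t$, namely $\sum_{i=1}^t \lambda_i(S') \le (k-1) f_{k-1}(S') + \binom{t+k-1}{k}$. Combining this with the contradiction hypothesis on $H$ and the identity $f_{k-1}(H) + f_{k-1}(S') = f_{k-1}(S)$ yields
\[
\sum_{i=1}^t \lambda_i(S) \le (k-1) f_{k-1}(H) + (k-1) f_{k-1}(S') + \binom{t+k-1}{k} = (k-1) f_{k-1}(S) + \binom{t+k-1}{k},
\]
i.e., $S$ satisfies the $t^{\text{th}}$ inequality, the desired contradiction. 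The degenerate case $H = S$, where $S'$ is empty and trivially satisfies the conjecture, is handled identically.

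I expect the only genuine subtlety to be the bookkeeping around the Laplacian additivity: one must verify that expressing $L(H)$ and $L(S')$ over the common $(k-2)$-face indexing of $S$ does not disturb the top-$t$ eigenvalue sums, and that $S'$ is a legitimate $(k-1)$-dimensional $k$-family to which both Conjecture~3 and the minimality hypothesis apply. Everything else is a direct chaining of Fan's theorem with minimality; the hypothesis $t > 1$ plays no role beyond restricting attention to the regime where counterexamples can occur, since Conjecture~3 already holds unconditionally at $t = 1$.
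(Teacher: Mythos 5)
Your proof is correct and is essentially the paper's own argument: both decompose $S$ into $H$ and the family $S \backslash H$ generated by the remaining facets, apply Corollary~\ref{cor:fan} to $L(S) = L(H) + L(S \backslash H)$, and combine facet-count additivity with minimality of $S$. The only difference is cosmetic---the paper chains the inequalities to conclude that $S \backslash H$ would be a smaller counterexample (contradicting minimality), whereas you invoke minimality first and contradict $S$ being a counterexample at all; these are contrapositive arrangements of the same chain, and your version additionally makes explicit the zero-padding bookkeeping and the degenerate case $H = S$ that the paper leaves implicit.
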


\begin{proof}
We prove this by contradiction. Let $S$ be such a counter-example with the minimum number facets. i.e.,
$$\sum_{i=1}^t \lambda_i (S) > (k-1)f_{k-1} (S) + k + 1. $$
Let $H$ be a $k$-subfamily of $S$. Then, $S \backslash H$ forms a $k$-family. By assumption, if $H$ satisfies $\sum_{i=1}^t \lambda_i (H) \leq (k-1)f_{k-1} (H)$, using the consequence of Fan's theorem, we have
\begin{align*}
\sum_{i=1}^t \lambda_i (H ) + \sum_{i=1}^t \lambda_i (S \backslash H) &\geq \sum_{i = 1}^t \lambda_i (S)\\
\sum_{i=1}^t \lambda_i (H ) + \sum_{i=1}^t \lambda_i (S \backslash H) & > (k-1)f_{k-1} (S) + \binom{t +k -1}{k}\\
(k-1) f_{k-1} (H) + \sum_{i=1}^t \lambda_i ( S \backslash H) &> (k-1)f_{k-1} (S) + \binom{t +k -1}{k}\\
\sum_{i=1}^t \lambda_i ( S \backslash H) & > (k-1)f_{k-1} (S \backslash H) + \binom{t +k -1}{k}
\end{align*}
Note, the second line is implied by $\sum_{i=1}^t \lambda_i (S) > (k-1)f_{k-1} (S) + k + 1.$

The last line implies that $S \backslash H$ is also a counter-example to the conjecture, contradicting minimality of $S$.
\end{proof}

\begin{thm}[Higher Partial Sums]\label{thm:hps}
The family of inequalities in Conjecture \ref{red} holds for all $t < k$ for $k$-families with matching number greater than $t$.
\end{thm}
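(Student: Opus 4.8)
The plan is to prove Theorem \ref{thm:hps} by contradiction, taking a minimum-cardinality counterexample $S$ and deriving a contradiction from the combination of Lemma \ref{lm:forbidden} and the matching hypothesis. The key leverage is that Lemma \ref{lm:forbidden} forces \emph{every} $k$-subfamily $H$ of a minimal counterexample to satisfy $\sum_{i=1}^t \lambda_i(H) > (k-1) f_{k-1}(H)$. If I can exhibit even one subfamily that violates this strict inequality, I contradict minimality. The natural candidate is the subfamily induced by a matching, since facets in a matching are pairwise ridge-disjoint (they intersect in codimension at least two), and ridge-disjoint facets behave almost independently under the Laplacian.

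\textbf{Main steps.} First I would fix a maximum matching $\{F_1, \ldots, F_m\}$ in $S$, where by hypothesis $m = M_S > t$. I would assemble a $k$-subfamily $H$ consisting of (the downward closure of) exactly $t+1$ of these matched facets; since $t < k$, these $t+1$ facets are pairwise ridge-disjoint, so by Corollary \ref{cor:fan} the Laplacian spectrum of $H$ is (at least) the union of the spectra of the individual pieces. Each single facet is a copy of the complete $k$-family on $k$ vertices $K_k$, whose Laplacian spectrum is known to be $(k, 0, \ldots, 0)$ — a single nonzero eigenvalue equal to $k$ (the one facet contributes $k$ to the trace, and its ridge graph is a single vertex, so there is one nonzero eigenvalue). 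Thus $H$ has $t+1$ facets each contributing a nonzero eigenvalue of $k$, and $\sum_{i=1}^t \lambda_i(H)$ picks up the $t$ largest, giving $\sum_{i=1}^t \lambda_i(H) = kt$. Meanwhile $f_{k-1}(H) = t+1$, so $(k-1) f_{k-1}(H) = (k-1)(t+1)$. The required contradiction with Lemma \ref{lm:forbidden} is then
\[
\sum_{i=1}^t \lambda_i(H) = kt \leq (k-1)(t+1) = (k-1) f_{k-1}(H),
\]
which rearranges to $kt \leq kt + k - t - 1$, i.e. $t+1 \leq k$, i.e. $t < k$ — precisely the dimension hypothesis. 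This contradicts the strict inequality guaranteed by Lemma \ref{lm:forbidden}, so no counterexample exists.

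\textbf{The main obstacle} I anticipate is justifying that the matched facets are genuinely ridge-disjoint when $t < k$, and that the spectrum of their union really is the disjoint union of the individual $K_k$ spectra. Facets in a matching are nonadjacent in the ridge graph, meaning no two share a codimension-one face; but I must check that the downward closures do not reintroduce ridge-connections through shared lower-dimensional faces, which could invalidate the application of Corollary \ref{cor:fan}. This is where the hypothesis $t < k$ is doing real work: with at most $t+1 \le k$ matched facets on disjoint ridges, the shared sub-faces sit low enough in dimension that they do not couple the facets at the level of the boundary map $\partial_{k-1}$, whose columns are indexed by the $(k-1)$-faces (the facets) and whose rows are the $(k-2)$-faces (the ridges). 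I would verify that $L(H)$ block-decomposes along the matched facets — equivalently that $\partial_{k-1}\partial_{k-1}^T$ has no cross terms between distinct $F_i$ — by observing that two ridge-disjoint facets share no common $(k-2)$-face, so the corresponding columns of $\partial_{k-1}$ have disjoint supports and are orthogonal. Once this orthogonality is established the spectral computation is immediate, and the whole argument collapses to the single arithmetic inequality $t < k$.
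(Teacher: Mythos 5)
Your proposal is correct and follows essentially the same route as the paper: both exhibit the $k$-subfamily $H$ of pairwise ridge-disjoint matched facets (disjoint copies of $K_k$), compute $\sum_{i=1}^t \lambda_i(H) = kt \leq (k-1)f_{k-1}(H)$ using exactly the hypotheses $M_S > t$ and $t < k$, and invoke Lemma \ref{lm:forbidden} to rule out a minimal counterexample. The only differences are cosmetic: you take exactly $t+1$ matched facets where the paper takes all $M_S$ of them, and you spell out the block-diagonalization of $L(H)$ (disjoint supports, hence orthogonality, of the columns of $\partial_{k-1}$) that the paper leaves implicit.
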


\begin{proof}
We prove that there exists a $k$-subfamily $H$ such that $\sum_{i = 1}^t \lambda_i(H) \leq (k - 1) f_{k - 1}(H)$, which, coupled with Lemma \ref{lm:forbidden}, proves the desired result.

If a $k$-family $S$ has matching number $M_S$, there exists a $k$-subfamily $H = K_k \sqcup K_k \sqcup \ldots \sqcup K_k$, where $K_k$ is a $k$-family on $k$ nodes. Recall that this $H$ corresponds to an independent set in the ridge graph of $S$, denoted by $G_S$. We note,
\begin{align*}
\sum_{i = 1}^t \lambda_i(H) &\leq (k-1) f_{k - 1}(H)\\
tk & \leq (k -1) M_S
\end{align*}
This last line holds since $M_S > t$ and $t < k$, by assumption.
\end{proof}

\subsection{Second Partial Sum}\label{sec:second}

We now turn our attention to the case of $t = 2$ for all $k$-families. Recall that we assume $k$-families considered in this section are connected. To see that we do not lose generality by making this assumption, suppose $S$ is a $k$-family such that $S = S_1 \sqcup S_2$. Then, either the top two eigenvalues of $S$ both come from one of the $S_i$, say $S_1$, or they are the largest eigenvalues of $S_1$ and $S_2$. In the case of the former, we only have to prove the conjecture for $S_1$. In the case of the latter, we can make use of the fact that Conjecture \ref{red} holds for $t = 1$. That is:
\begin{align*}
\lambda_1(S) + \lambda_2(S) &= \lambda_1(S_1) + \lambda_1(S_2)\\
& \leq (k-1) f_{k - 1}(S_1) + (k - 1) f_{k - 1}(S_2) + 2\\
& = (k - 1)f_{k - 1}(S) + 2.
\end{align*}

Recall that since Brouwer's conjecture has already been shown to hold for graphs, we can assume that $k > 2$. Furthermore, in light of Theorem \ref{thm:hps}, we only need to focus on the case where $S$ has matching numbers 1 or 2.

\begin{lm}
A $k$-family $S$ has matching number 1 (i.e., has a complete ridge graph) if and only if $S$ is the simplicial join of $S_1 \star S_2$ of a $k_1$-family $S_1$ and a $k_2$-family $S_2$ (where $k = k_1 + k_2$) such that:
\begin{itemize}
\item $S_1$ has only one $k_1$-set and
\item $S_2$ is either a $1$-family, so a set of disjoint vertices, or a $k_2$-family for $k_2 \geq 2$ consisting of all $k_2$-sets of some $k_2 + 1$-set.
\end{itemize}
\end{lm}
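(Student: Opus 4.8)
The plan is to reformulate the hypothesis combinatorially and then prove the two directions separately, with the forward direction reducing to a classical dichotomy for families of $k$-sets having constant pairwise intersection. First I would unwind the definitions: matching number $1$ means the largest independent set of the ridge graph $G_S$ has size $1$, which is equivalent to $G_S$ being complete, i.e.\ every pair of distinct facets is adjacent. Since adjacency in $G_S$ means meeting in co-dimension one, the hypothesis is exactly that $|F \cap F'| = k-1$ for all distinct facets $F, F'$ of $S$. The whole lemma thus becomes a statement about families of $k$-sets in which any two members intersect in precisely $k-1$ elements.

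For the (easy) reverse direction I would simply verify this intersection condition for each prescribed form. If $S = \{A\} \star S_2$ with $|A| = k-1$ and $S_2$ a set of disjoint vertices $\{b_i\}$, the facets are $A \cup \{b_i\}$ and any two meet exactly in $A$, a $(k-1)$-set. If instead $S_2$ is all $k_2$-subsets of a $(k_2+1)$-set $B$, the facets are $A \cup (B \setminus \{b\})$, and two of them meet in $A \cup (B \setminus \{b,b'\})$, of size $k_1 + (k_2 - 1) = k-1$. In both cases every pair meets in co-dimension one, so the ridge graph is complete.

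For the forward direction I would establish the dichotomy. Assuming $S$ has at least two facets (the single-facet case is an immediate base case, written as $\{v_1,\dots,v_{k-1}\}\star\{\{v_k\}\}$), fix facets $A,B$ and set $C = A \cap B$, so $|C| = k-1$, $A = C \cup \{a\}$, $B = C \cup \{b\}$ with $a \neq b$. The key split is on whether every facet contains $C$. If so, each facet has the form $C \cup \{x\}$ with the $x$'s distinct, which is exactly $\{C\} \star (\text{disjoint vertices})$, the first form. Otherwise some facet $D$ omits an element $c \in C$; using $|D \cap A| = |D \cap B| = k-1$ I would deduce that $D$ contains both $a$ and $b$ and equals $(C \setminus \{c\}) \cup \{a,b\}$, so $D \subseteq X := A \cup B$, a $(k+1)$-set, with $a,b,c$ three distinct elements of $X$.

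The crux, and the step I expect to be the main obstacle, is to confine the \emph{entire} family to $X$ and then pin down its exact shape. For an arbitrary facet $E$, the relation $|E \cap A| = k-1$ with $A \subseteq X$ already forces $E$ to have at most one element outside $X$; if it had exactly one, then comparing $E$ against the three facets $A = X \setminus \{b\}$, $B = X \setminus \{a\}$, $D = X \setminus \{c\}$ would force $a,b,c \notin E$, leaving $|E \cap X| \le k-2$, a contradiction. Hence every facet is a $k$-subset of the $(k+1)$-set $X$, i.e.\ the complement in $X$ of a single vertex. Letting $A^\ast$ be the intersection of all facets and $Y = X \setminus A^\ast$, I would argue that the facets are precisely $\{X \setminus \{y\} : y \in Y\} = \{A^\ast\} \star \{\text{all } (|Y|-1)\text{-subsets of } Y\}$, since by definition of $A^\ast$ each $y \in Y$ is omitted by some facet while each facet omits exactly one element of $X$; because $D \neq A,B$ supplies a third omitted element, $|Y| \ge 3$, so $k_2 = |Y| - 1 \ge 2$ and we land in the second form. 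The delicate points are verifying that $a,b,c$ are genuinely distinct (so the three-facet comparison is valid, which needs $C \neq \emptyset$, guaranteed since $k > 2$) and that the ``all subsets'' conclusion is forced rather than merely permitted; both follow from carefully tracking which elements of $X$ are omitted.
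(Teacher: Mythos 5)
Your proof is correct, but it is structured differently from the paper's and is in fact more complete. The paper's forward direction defines $S_1$ in one stroke as the common intersection $\cap_{i=1}^{s} F_i$ of all facets, strips $V(S_1)$ from every facet, observes that the stripped facets $\overline{F}_i$ still intersect pairwise in co-dimension one but now have empty common intersection, and then simply asserts that such a family must be the complete $k_2$-family (the disjoint-vertex case being the degenerate $k_2=1$ instance). That asserted step is precisely the nontrivial dichotomy, and it is exactly what you prove: your case split on whether every facet contains $C = A \cap B$, and, in the negative case, the confinement of every facet to the $(k+1)$-set $X = A \cup B$ by playing an arbitrary facet $E$ against the three facets $X \setminus \{a\}$, $X \setminus \{b\}$, $X \setminus \{c\}$, is a correct proof of the near-pencil-versus-boundary-of-a-simplex dichotomy for families of $k$-sets with pairwise intersections of size $k-1$; the final identification of the facet set with $\{X \setminus \{y\} : y \in Y\}$, where $Y = X \setminus A^{\ast}$, is forced exactly as you argue. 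So the two arguments buy different things: the paper's top-down framing (quotient by the global core first) makes the decomposition $S = S_1 \star S_2$ and the backward direction transparent, but leaves the key combinatorial claim unjustified, while your bottom-up argument supplies that missing justification at the cost of more bookkeeping. One small correction: the distinctness of $a, b, c$ needs only $C \neq \emptyset$, i.e.\ $k \geq 2$ rather than $k > 2$; and when $k = 1$ the set $C$ is empty, so every facet vacuously contains it and your first case absorbs that situation, so nothing breaks at the boundary.
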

%TODO: What if k_2 = 0?

\begin{proof}
For the forward direction, label the facets of $S$ by $\{F_1, F_2, \ldots, F_s\}$ and let $S_1 = \cap_{i = 1}^s F_s$. This is a full $k_1$-simplex (one $k_1$-set) where $k_1 \leq k$. Note that this $k_1$ may be 0. Take the facets $\{F_1 \backslash V(S_1), F_2 \backslash V(S_1), \ldots, F_s \backslash V(S_1)\}$ and relabel them with $\overline{F}_i = F_i \backslash V(S_1)$ for all $i \in [s]$. These faces are $k_2$-families, where $k_2 = k - k_1$. Let $S_2$ be the $k$-family whose facets are these $\overline{F}_i$. Any two facets $F_j$ and $F_k$ intersect in co-dimension one in $S$. Therefore, $\overline{F}_j$ and $\overline{F}_k$ must also intersect in co-dimension one, which implies that $S_2$ has a complete ridge graph. But, all the facets of $S_2$ do not have a common intersection. i.e., $\cap_{i = 1}^s \overline{F}_i = \emptyset.$ Therefore, $S_2$ is a complete $k$-family.

%TODO this is not trivial: maybe worth including the argument unless it's proven somewhere earlier or at least allude to it by saying it is not difficult to show that this implies that $S_2$ is...

The backward direction is more straightforward. Assume that $S = S_1 \star S_2$ such that $S_1$ and $S_2$ satisfy the conditions given above. Then, since $S_2$ is a complete $k$-family, it has a complete ridge graph. And, since $S_1$ is just a full simplex, the simplicial join of the two will continue to have a complete ridge graph. Therefore, $S$ has matching number $1$.
\end{proof}

This result implies the following lemma:

\begin{lm}[Matching Number 1]
Conjecture \ref{red} holds on the second partial sum for $k$-families $S$ with matching number 1.
\end{lm}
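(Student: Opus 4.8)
The plan is to feed the structural dichotomy of the preceding lemma straight into the $t=2$ inequality, handling the two possible shapes of $S_2$ separately, since they force very different behaviour of the vertex count. I would write $S = S_1 \star S_2$ with $S_1$ a single $k_1$-set and $S_2$ either a $1$-family on some $p$ disjoint vertices or the complete $k_2$-family on $k_2+1$ vertices, where $k = k_1 + k_2$ and $k_2 \ge 2$ in the latter case. Since Brouwer's conjecture is already settled for graphs I may assume $k > 2$, so the instance of Conjecture \ref{red} to be checked is $\lambda_1(S) + \lambda_2(S) \le (k-1)f_{k-1}(S) + k + 1$. Rather than compute the join spectrum of $S$ exactly (which is delicate), I would exploit the fact that each case already pins down everything I need about the top two eigenvalues.

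First case: $S_2$ a set of $p$ disjoint vertices. Then $k_1 = k-1$, and joining the full $(k-1)$-simplex $S_1$ to $p$ isolated vertices produces exactly the star $k$-family on $n = (k-1) + p$ vertices, whose Laplacian spectrum $(n, k-1, \ldots, k-1, 0)$ is recorded in the section on simplicial trees. Hence $\lambda_1 = n$ and $\lambda_2 \le k-1$ (the value $k-1$ occurring with multiplicity $n-k$, which vanishes precisely when $S$ is a single simplex), so $\lambda_1 + \lambda_2 \le n + (k-1)$. Substituting $f_{k-1}(S) = p = n - k + 1$, the target inequality reduces after simplification to $(k-2)(n-k) \ge -1$, which holds for every $n \ge k$ with $k \ge 2$. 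The point to flag is that $n$ is unbounded here, so the crude estimate $\lambda_2 \le \lambda_1 \le n$ is far too weak; the argument genuinely needs the exact second eigenvalue $k-1$ coming from the star spectrum.

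Second case: $S_2$ the complete $k_2$-family on $k_2 + 1$ vertices with $k_2 \ge 2$. Now the whole complex lives on only $n = k_1 + (k_2+1) = k+1$ vertices, and its facets are the $k_2 + 1$ sets $V(S_1) \cup B$ with $B$ a $k_2$-subset of $V(S_2)$, so $f_{k-1}(S) = k_2 + 1 \ge 3$. Because every Laplacian eigenvalue is at most the number of vertices, $\lambda_1, \lambda_2 \le n = k+1$, whence $\lambda_1 + \lambda_2 \le 2(k+1)$. The claim then follows from $2(k+1) \le (k-1)(k_2+1) + k + 1$, i.e. from $k+1 \le (k-1)(k_2+1)$, and since $(k-1)(k_2+1) \ge 3(k-1) \ge k+1$ for $k \ge 2$ this is immediate. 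In contrast to the first case, the bounded vertex count $n = k+1$ is exactly what makes the trivial bound $\lambda_i \le n$ sufficient.

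I expect the only real subtlety — something to state carefully rather than a genuine obstacle — to be the bookkeeping that separates the two regimes: matching number $1$ splits into an ``unbounded $n$, small $\lambda_2$'' case (the star, where one must import the precise spectrum) and a ``bounded $n$'' case (the join with a simplex boundary, where the naive eigenvalue bound closes the gap), and no single uniform estimate covers both. Once the structural lemma is in hand, each case is a one-line spectral input followed by an elementary inequality, so no hard analysis remains; I would just be sure to check that the degenerate sub-case $n = k$ of the star, where $\lambda_2 = 0$, sits comfortably inside the same computation.
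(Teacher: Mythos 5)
Your proof is correct, but it closes the argument by a genuinely different route than the paper. Both start from the same structural lemma (matching number $1$ forces $S = S_1 \star S_2$ with $S_1$ a single simplex and $S_2$ either a set of isolated vertices or all $k_2$-subsets of a $(k_2+1)$-set), but the paper then observes that any such join is a \emph{shifted} $k$-family --- label the vertices of $S_1$ by $1,\ldots,k_1$ and those of $S_2$ afterwards --- and invokes Theorem \ref{thm:shifted}, so the inequality follows at once, and in fact for every $t$, not just $t=2$. You instead compute directly in each branch of the dichotomy: in the star case you import the exact spectrum $(n, k-1, \ldots, k-1, 0)$ and reduce the $t=2$ inequality to $(k-2)(n-k)\ge -1$, and in the bounded case $n = k+1$ you use $\lambda_1, \lambda_2 \le n$ (available from the cited bound $\lambda_1 \le n$, or from the complement-spectrum formula) and reduce to $k+1 \le (k-1)(k_2+1)$, which holds since $k_2+1\ge 3$. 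Both reductions check out, including the degenerate single-facet case $n=k$ where $\lambda_2 = 0$, and your observation that the two regimes need different estimates (exact $\lambda_2$ for unbounded $n$ versus the trivial bound for bounded $n$) is accurate. What your route buys is self-containedness: it avoids Theorem \ref{thm:shifted}, which rests on the Duval--Reiner equality for shifted complexes, and uses only elementary spectral facts already quoted in the paper. What it costs is the case split and the restriction to $t=2$; the paper's one-line reduction to shiftedness gives the full family of inequalities for this class of complexes.
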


\begin{proof}
By the above lemma, such a $k$-family $S$ is a simplicial join $S = S_1 \star S_2$. To see that this gives us a shifted $k$-family, label the indices of the facet in $S_1$ by $\{1, \ldots, k_1\}$ and the vertices in $S_2$ by $\{k_1 + 1, k_1+2, \ldots k_2\}$. Shiftedness follows from the fact that $S_2$ is a complete $k$-family and $S_1$ is just one simplex. Therefore, by Theorem \ref{thm:shifted}, the desired result holds.
\end{proof}

We now turn our attention to the final case -- $k$-families with matching number two.

First, note that if the ridge graph of a $k$-family is a tree, then the $k$-family is a tree, following our definition.
%TODO write this down as a proposition and prove it. Can state this as an iff statement?
A leaf in a ridge graph corresponds to a leaf in a $k$-family and all subgraphs of a tree are also trees (or forests, if they have more than one connected component). Since Conjecture \ref{red} has already been shown to hold for simplicial trees, we can consider $k$-families whose ridge graphs are not trees.

Let $\ell$ be the minimal cycle length of a ridge graph of a $k$-family $S$. If $\ell > 6$, then $S$ has a matching number greater than $2$. Therefore, we can narrow our attention to $3 \leq \ell \leq 5$. We will consider each of these cases independently. Before doing so, we first note several forbidden $k$-subfamilies by Lemma \ref{lm:forbidden}.

\begin{lm}
Suppose $S$ is a $k$-family, where $k \geq 3$, with matching number $2$. Let $S$ have $k$-subfamilies $K_{k+1}$ (the complete $k$-family on vertex set $[k+ 1]$), $S_{2, 2} = S_2 \sqcup S_2$ (where $S_2$ is the star $k$-family with two facets), or $S_{3} \sqcup K_k$, where $S_3$ is the star k-family with three facets. Then, $S$ is not a minimum-cardinality counterexample to the $t = 2$ inequality for Conjecture \ref{red}.
\end{lm}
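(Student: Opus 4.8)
The plan is to invoke the contrapositive of Lemma~\ref{lm:forbidden}. That lemma says that in a minimum-cardinality counterexample to the $t=2$ inequality, \emph{every} $k$-subfamily $H$ must satisfy $\lambda_1(H) + \lambda_2(H) > (k-1)f_{k-1}(H)$. Hence it suffices to exhibit, for each of the three named subfamilies $H \in \{K_{k+1},\, S_{2,2},\, S_3 \sqcup K_k\}$, the opposite inequality
\[ \lambda_1(H) + \lambda_2(H) \le (k-1)f_{k-1}(H); \]
the presence of such an $H$ inside $S$ then immediately forces $S$ not to be a minimum-cardinality counterexample. So the whole argument reduces to three eigenvalue computations followed by three elementary inequalities in $k$.

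For the eigenvalues I would read off spectra already recorded in the excerpt rather than recompute from the boundary maps. For $K_{k+1}$, the complete $k$-family is shifted, so the Duval--Reiner theorem gives $\lambda_1 + \lambda_2 = d_1^T + d_2^T$; since every vertex of $K_{k+1}$ has degree $\binom{k}{k-1}=k\ge 2$, the conjugate partition begins $(k+1,k+1,\dots)$, whence $\lambda_1 = \lambda_2 = k+1$, while $f_{k-1} = \binom{k+1}{k} = k+1$. For the star pieces I would use the stated star spectrum $(n, k-1, \dots, k-1, 0)$: the two-facet star $S_2$ (on $n=k+1$ vertices) has nonzero spectrum $\{k+1, k-1\}$, and the three-facet star $S_3$ (on $n=k+2$ vertices) has nonzero spectrum $\{k+2, k-1, k-1\}$, while the single simplex $K_k$ has the lone nonzero eigenvalue $k$. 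Finally, since the Laplacian spectrum of a disjoint union is the multiset union of the spectra of its components, the top two eigenvalues of the two compound families follow directly.

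Assembling these, $S_{2,2} = S_2 \sqcup S_2$ has nonzero spectrum $\{k+1, k+1, k-1, k-1\}$, so $\lambda_1 + \lambda_2 = 2(k+1)$ with $f_{k-1} = 4$, and $S_3 \sqcup K_k$ has nonzero spectrum $\{k+2, k, k-1, k-1\}$, so again $\lambda_1 + \lambda_2 = 2k+2$ with $f_{k-1} = 4$. The three inequalities to verify are then $2(k+1) \le (k-1)(k+1)$ for $K_{k+1}$ and $2(k+1) \le 4(k-1)$ for both compound families; each simplifies to $k \ge 3$, which holds by hypothesis, with equality exactly at $k=3$.

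The step I expect to require the most care is identifying $\lambda_2$ correctly after merging spectra, i.e.\ checking that no component secretly contributes a second large eigenvalue. This is why the ordering matters in $S_3 \sqcup K_k$: one must confirm that the eigenvalue $k$ from $K_k$ exceeds the second star eigenvalue $k-1$, so that $\lambda_2 = k$ rather than $k-1$; and for $K_{k+1}$ one must confirm the top eigenvalue genuinely has multiplicity at least two (guaranteed by $\binom{k}{k-1}=k\ge 2$) so that $\lambda_2 = k+1$. Once this multiplicity and ordering bookkeeping is settled, the remainder is the routine arithmetic above together with a single application of Lemma~\ref{lm:forbidden}.
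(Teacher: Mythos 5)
Your proposal is correct and follows essentially the same route as the paper: both invoke Lemma~\ref{lm:forbidden} (its contrapositive) and reduce the claim to computing $\lambda_1(H)+\lambda_2(H)$ and $f_{k-1}(H)$ for the three subfamilies, arriving at the same values ($k+1,k+1$ with $k+1$ facets for $K_{k+1}$; $k+1,k+1$ and $k+2,k$ with four facets for the two compound families) and the same reduction to $k\ge 3$. Your write-up is in fact more careful than the paper's, which merely asserts the spectra; your justification via Duval--Reiner for $K_{k+1}$, the star spectrum, and the disjoint-union rule (including the ordering check that $\lambda_2(S_3\sqcup K_k)=k$, not $k-1$) fills in exactly the details the paper leaves implicit.
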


\begin{proof}
We reason about each of these forbidden subfamilies one at a time:
\begin{itemize}
\item[Case 1:] Consider $K_{k+1}$. Then, $\lambda_1 = \lambda_2 = k + 1$ and $f_{k-1}(K_{k+1}) = k + 1$.
\item[Case 2:] Consider $S_{2, 2}$. Here, $\lambda_1 = \lambda_2 = k + 1$, and $S_2 \sqcup S_2$ has four facets.
\item[Case 3:] And last, consider $S_{3} \sqcup K_k$. Note, $\lambda_1(S_3 \sqcup K_k) = k + 2$ and $\lambda_2(S_3 \sqcup K_k) = k$, while this $k$-family has four facets.
\end{itemize}

\noindent Since $k \geq 3$, by applying Lemma \ref{lm:forbidden}, we note that if any of the above are $k$-subfamilies of $S$, then $S$ cannot be a minimum-cardinality counterexample to Conjecture \ref{red} for $t = 2$.
\end{proof}

\begin{lm}[Matching Number 2]
Let $S$ be a $k$-family with matching number 2. Then, it satisfies Conjecture \ref{red} for $t = 2$.
\end{lm}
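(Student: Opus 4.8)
The plan is to argue by contradiction using the forbidden-subfamily machinery of Lemma \ref{lm:forbidden}. Suppose $S$ is a counterexample of minimum cardinality to the $t=2$ inequality. As noted just before the lemma, I may assume $k \geq 3$, that $S$ is ridge-connected, and --- in view of Theorem \ref{thm:hps} and the Matching Number~1 lemma --- that $S$ has matching number exactly $2$; equivalently, the ridge graph $G_S$ has independence number $2$ and is neither complete nor a tree (the tree case being settled by the simplicial-tree theorem). Thus $G_S$ contains a cycle; let $\ell$ be its girth. Since a shortest cycle is chordless, an induced $C_\ell$ supplies an independent set of facets of size $\lfloor \ell/2\rfloor$, so matching number $2$ forces $\lfloor \ell/2 \rfloor \leq 2$, i.e.\ $\ell \in \{3,4,5\}$. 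In each case the goal is to exhibit one of the forbidden subfamilies $K_{k+1}$, $S_{2,2}$, or $S_3\sqcup K_k$ inside $S$, which by Lemma \ref{lm:forbidden} contradicts minimality.

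The structural tool I would establish first is a dichotomy for adjacent facets. Two ridge-adjacent facets meet in a common $(k-1)$-set, so they are $C\cup\{x\}$ and $C\cup\{y\}$; examining the ways a third facet can be adjacent to both shows that every triangle of $G_S$ has exactly one of two types --- either the three facets share a common $(k-1)$-set and form a star $S_3$ on $k+2$ vertices, or their union has only $k+1$ vertices, so they are three facets of a complete family $K_{k+1}$. This disposes of girth $3$: a triangle that is a star already yields $S_3$, which for $k\geq 4$ is itself forbidden (a direct eigenvalue check gives $\lambda_1+\lambda_2 = 2k-2+3 \leq 3(k-1)$ exactly when $k\geq 4$), while for $k=3$ I upgrade it to $S_3\sqcup K_k$ by producing a facet disjoint from the star's $(k+2)$-vertex support, using ridge-connectivity together with matching number $2$; a triangle of the second type lies in a $K_{k+1}$, which I complete to the full forbidden $K_{k+1}$ or reduce through the same dichotomy.

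For girth $4$ and $5$ I would use the induced cycle $F_1F_2\cdots F_\ell$ both to supply a $2$-matching (its non-adjacent ``diagonal'' pairs) and to pin down the vertex overlaps along the cycle by applying the adjacency dichotomy to each consecutive triple $F_{i-1}F_iF_{i+1}$. The delicate point is that the cycle's own facets overlap heavily --- consecutive ones share $k-1$ vertices --- whereas the three forbidden families are all \emph{vertex-disjoint} unions, so none of them can be read off the cycle alone. They must instead be assembled by combining a pair of cycle facets with a further facet obtained from connectivity, chosen to be disjoint from the relevant support; tracking the overlaps well enough to guarantee this disjointness, and verifying that no residual configuration avoids all three patterns, is where the bulk of the argument lies.

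The step I expect to be the main obstacle is precisely this vertex-overlap bookkeeping for $\ell=4$ and $\ell=5$: one must show that the combination of independence number exactly $2$, ridge-connectivity, and a chordless cycle forces a completed $K_{k+1}$, a genuinely vertex-disjoint $S_{2,2}$, or an $S_3\sqcup K_k$, leaving no gap. I anticipate several sub-cases within each girth, organized by how the ``extra'' vertices are distributed among the cycle facets and by whether the connecting facet reuses cycle vertices, and the $k=3$ boundary --- where the lighter family $S_3$ is not yet forbidden and the heavier $S_3\sqcup K_k$ is needed --- will require separate care throughout.
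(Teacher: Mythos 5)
Your high-level frame (minimal counterexample, Lemma \ref{lm:forbidden}, reduction to girth $\ell \in \{3,4,5\}$) is the same as the paper's, but the proposal has a genuine gap: the cases $\ell = 4$ and $\ell = 5$ are never actually carried out --- you explicitly defer them as ``where the bulk of the argument lies'' --- and even within girth $3$ the sub-case of a triangle whose three facets lie in a $K_{k+1}$ ends at ``complete to the full forbidden $K_{k+1}$ or reduce through the same dichotomy,'' which is not an argument: a minimal counterexample need not contain the remaining $k-2$ facets of that $K_{k+1}$, so there is nothing to complete to. A case plan whose hardest cases are acknowledged obstacles does not establish the lemma.

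Moreover, the obstacle you anticipate is largely self-inflicted. You restrict the forbidden configurations to the three from the preceding lemma ($K_{k+1}$, $S_{2,2}$, $S_3 \sqcup K_k$), read them as vertex-disjoint unions, and are then forced into the bookkeeping problem of extracting vertex-disjoint pieces from a chordless cycle whose consecutive facets share $k-1$ vertices. The paper avoids both difficulties. First, Lemma \ref{lm:forbidden} applies to \emph{any} $k$-subfamily whose top two eigenvalues can be computed, so the paper applies it afresh to the cycle subfamilies themselves: for $\ell = 4$ it invokes the structure theorem of Caboara et al.\ (a simplicial cycle is a cone over a cycle with pairwise disjoint intersections) together with the coning rule $\lambda_i \mapsto \lambda_i + 1$ to get $\lambda_1(C_4) = k+2$, $\lambda_2(C_4) = k$, and then $2k+2 \leq 4(k-1)$ for $k \geq 3$; for $\ell = 5$ with $k = 3$ it uses $C_5$ itself, whose top two eigenvalues are both $\approx 4.618$ against $(k-1)f_{k-1} = 10$. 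Second, the disjointness needed for spectra to add is only ridge-disjointness (no shared $(k-1)$-set), not vertex-disjointness, since the Laplacian is indexed by the $(k-1)$-subsets; this is how the paper obtains $S_2 \sqcup K_k$ (for $k > 3$, in both the girth-$3$ and girth-$5$ cases) from two consecutive cycle facets plus any facet with no co-dimension-one intersection with them, with no vertex bookkeeping at all. Your one completed new step --- that $S_3$ alone is forbidden for $k \geq 4$ since $\lambda_1 + \lambda_2 = 2k+1 \leq 3(k-1)$ --- is correct and slightly simpler than the paper's use of $S_2 \sqcup K_k$ there; but without the $\ell = 4$, $\ell = 5$, and $K_{k+1}$-type triangle cases, the proposal does not prove the statement.
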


\begin{proof}
We prove the cases where $\ell = 3, 4,$ and $5$ independently. The argument proceeds by finding forbidden $k$-subfamilies for each of these cases and applying Lemma \ref{lm:forbidden} to contradict minimality of $S$.

Let $S$ have a ridge graph with a minimum cycle of length $3$. Denote the $k$-subfamily corresponding to this cycle by $C_3$. This $k$-subfamily has matching number $1$, while $S$ has matching number 2. Therefore, there exists a facet $F$ in $S$ such that $F$ and the $k$-subfamily corresponding to $C_3$ do not have a co-dimension one intersection. We therefore have $S_2 \sqcup K_k$ as a $k$-subfamily, where $S_2$ is the star $k$-family on $k+1$ vertices and $K_k$ is the complete $k$-family on $k$ vertices. The two largest eigenvalues of $S_2 \sqcup K_k$ are $k+1$ and $k$, where the former is the largest eigenvalue of $S_2$ and the latter is that of $K_k$. This is a forbidden $k$-subfamily for $k > 3$ since it violates Lemma \ref{lm:forbidden}. To note the case for $k = 3$, we consider the $k$-subfamily $C_3 \sqcup K_k$. This $C_3$ can either be $S_3$, the star $k$-family on $k +2$ vertices, or the following $k$-family:

\begin{figure}[!ht]
\centering
\begin{tikzpicture}[scale = 1.3]
% NODES
\node[fill=gray] (n1) at (0, 0) {};
\node[fill=red] (n2) at (-1.2, -1) {};
\node[fill=red] (n3) at (1.2, -1) {};
\node[fill=red] (n4) at (0, 1) {};

% DRAW TREE
\fill[fill=blue!20] (n4.center)--(n2.center)--(n3.center);
\path[draw] (n1)--(n2);
\path[draw] (n1)--(n3);
\path[draw] (n1)--(n4);
\path[draw] (n2)--(n3);
\path[draw] (n4)--(n3);
\path[draw] (n4)--(n2);
\node[circle,fill=gray!20] (n1) at (0, 0) {};
\node[circle,fill=gray!20] (n2) at (-1.2, -1) {};
\node[circle,fill=gray!20] (n3) at (1.2, -1) {};
\node[circle,fill=gray!20] (n4) at (0, 1) {};
\end{tikzpicture}
\end{figure}

\noindent If $C_3$ is the star $k$-family on $k+2$ vertices, then $C_3 \sqcup K_k$ contains $S_3 \sqcup K_k$ as a $k$-subfamily, which we have already noted is a forbidden $k$-subfamily. If $C_3$ corresponds to $k$-subfamily above, then can compute the Laplacian eigenvalues to find that the top two eigenvalues of $C_3 \sqcup K_k$ are both $4$, while the sub-family has four facets. Therefore, it remains a forbidden $k$-subfamily.

Suppose $\ell = 4$ and denote the subfamily corresponding to the cycle by $C_4$. Caboara et al. note that simplicial cycles are either a sequence of facets joined together to form a circle in such a way that all intersections are pairwise disjoint or they are the simplicial cone over such a structure \cite{F2}. For the case where the cycle is of length $4$, these two are equivalent. Therefore, we can construct a $k$-family $C_4$ by first starting with the cycle graph on four nodes and taking the simplicial cone until we have a $k$-family. We have already noted how the Laplacian spectrum changes under the coning operation. Namely, $\lambda_1(C_4) = k+2$ and $\lambda_2(C_4) = k$. On the other hand, $C_4$ has four facets. Therefore, it is a forbidden $k$-subfamily for $k \geq 3$ since the inequality in Lemma \ref{lm:forbidden} requires that:
$$ k + 2 + k > 4(k-1)$$
which does not hold if $k \geq 3$.

Suppose $\ell = 5$. We note that $S_2 \sqcup K_{k}$ is a $k$-subfamily. The two largest eigenvalues of this $k$-subfamily are $k+1$ and $k$. Therefore, it is a forbidden $k$-subfamily for $k > 3$. For the case where $k = 3$, we will instead focus on the $k$-subfamily corresponding to the cycle itself, $C_5$. This subfamily has two largest eigenvalues $4.618$ both. Therefore, it is a forbidden $k$-subfamily since $C_5$ has five facets and $k \geq 3$.

\end{proof}

Combining the lemma for $k$-families with matching numbers $1$ and $2$, we get the desired result:

\begin{thm}[Second Partial Sum]\label{thm:sps}
The conjectural family of inequalities given in Conjecture \ref{red} holds for all $S$ for the second partial sum.
\end{thm}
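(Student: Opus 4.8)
The plan is to prove Theorem~\ref{thm:sps} by assembling the pieces already established for the second partial sum, relying crucially on the matching-number reduction. First I would observe that Theorem~\ref{thm:hps} (Higher Partial Sums) already settles the case $t=2$ whenever the matching number $M_S$ exceeds $2$, since $t=2 < k$ (recall $k>2$, as the graph case $k=2$ is Brouwer's original conjecture, known for $t=2$). Thus the only outstanding $k$-families are those with matching number exactly $1$ or exactly $2$. These two residual cases are precisely what the preceding Lemma (Matching Number 1) and Lemma (Matching Number 2) dispose of. Hence the entire content of the theorem is a case split on the value of $M_S \in \{1, 2, \geq 3\}$, with each branch already resolved.

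The key steps, in order, are as follows. I would begin by reducing to connected $k$-families, invoking the disjoint-union argument preceding the statement: if $S = S_1 \sqcup S_2$, then either both of the top two eigenvalues arise from a single component (reducing to a smaller instance) or they are the respective top eigenvalues of $S_1$ and $S_2$, in which case the $t=1$ case of Conjecture~\ref{red} gives
\[
\lambda_1(S) + \lambda_2(S) \leq (k-1)f_{k-1}(S_1) + (k-1)f_{k-1}(S_2) + 2 = (k-1)f_{k-1}(S) + 2,
\]
which is even stronger than required. With connectedness in hand, I would then partition by matching number: the case $M_S \geq 3$ follows from Theorem~\ref{thm:hps}; the case $M_S = 1$ follows from the Matching Number~1 Lemma (via the structural decomposition $S = S_1 \star S_2$ into a shifted complex and an appeal to Theorem~\ref{thm:shifted}); and the case $M_S = 2$ follows from the Matching Number~2 Lemma, whose proof in turn runs a minimal-counterexample argument over the minimal ridge-cycle length $\ell \in \{3,4,5\}$ together with Lemma~\ref{lm:forbidden}.

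Since every branch is already proved, the theorem itself is essentially a bookkeeping assembly, and the main obstacle is not in the final statement but in ensuring the case split is exhaustive and non-overlapping. The one point I would check carefully is the boundary between the regimes: Theorem~\ref{thm:hps} requires $t < k$, so I must confirm that $k > 2$ legitimately holds (guaranteed by the reduction to the graph-free case), and that $M_S \geq 3$ with $t = 2$ indeed satisfies the hypothesis $M_S > t$. I would also verify that the matching-number trichotomy genuinely covers all connected $k$-families and that the three lemmas collectively leave no gap — in particular that the Matching Number~2 Lemma's restriction to $3 \le \ell \le 5$ is justified by the observation that $\ell > 6$ forces $M_S > 2$, and that ridge-graph trees are absorbed by the simplicial tree result. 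The proof of the theorem therefore amounts to stating the trichotomy on $M_S$ and citing the corresponding lemma in each case, with the connectedness reduction supplied up front.
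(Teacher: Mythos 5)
Your proposal matches the paper's own argument: the paper proves Theorem~\ref{thm:sps} exactly by combining the connectedness reduction and the $k>2$ reduction with the trichotomy on matching number --- Theorem~\ref{thm:hps} for $M_S \geq 3$, the Matching Number~1 Lemma (via shiftedness and Theorem~\ref{thm:shifted}), and the Matching Number~2 Lemma (via forbidden subfamilies and Lemma~\ref{lm:forbidden}). Your assembly, including the boundary checks that $t=2<k$ and $M_S>t$, is the same bookkeeping the paper performs.
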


\section*{Acknowledgments}

The author would like to thank Victor Reiner for his extensive feedback and guidance throughout the course of this work. The author would also like to thank Joshua Pfeffer and Thomas McConville for helpful discussions, especially for the proof for shifted simplicial complexes. The author has also benefited greatly from discussions with Laszlo M. Lovasz, Shravas Rao, and Richard Stanley. Part of this work was completed as part of an REU at the University of Minnesota, where support was provided by the NSF/DMS-1148634 and the RTG grants. This work has also been partially supported by scholarships from Facebook, Google, and the Howard Hughes Medical Institute.

\bibliographystyle{plain}
\bibliography{refs}

\begin{thebibliography}{10}

\bibitem{Bai}
Hua Bai.
\newblock The {G}rone-{M}erris conjecture.
\newblock {\em Transactions of the American Mathematical Society},
  363(8):4463--4474, 2011.

\bibitem{BH}
Andries~E Brouwer and Willem~H Haemers.
\newblock {\em Spectra of graphs}.
\newblock Springer Science \& Business Media, 2011.

\bibitem{F2}
Massimo Caboara, Sara Faridi, and Peter Selinger.
\newblock Simplicial cycles and the computation of simplicial trees.
\newblock {\em Journal of Symbolic Computation}, 42(1-2):74--88, 2007.

\bibitem{C2}
Xiaodan Chen.
\newblock On brouwer's conjecture for the sum of k largest {Laplacian}
  eigenvalues of graphs.
\newblock {\em Linear Algebra and its Applications}, 2019.

\bibitem{CLF}
Xiaodan Chen, Jingjian Li, and Yingmei Fan.
\newblock Note on an upper bound for sum of the {Laplacian} eigenvalues of a
  graph.
\newblock {\em Linear Algebra and its Applications}, 541:258--265, 2018.

\bibitem{DMG}
Kinkar~Ch Das, Seyed~Ahmad Mojallal, and Ivan Gutman.
\newblock On {L}aplacian energy in terms of graph invariants.
\newblock {\em Applied Mathematics and Computation}, 268:83--92, 2015.

\bibitem{DZ}
Zhibin Du and Bo~Zhou.
\newblock Upper bounds for the sum of {L}aplacian eigenvalues of graphs.
\newblock {\em Linear Algebra and its Applications}, 436(9):3672--3683, 2012.

\bibitem{DR}
Art Duval and Victor Reiner.
\newblock Shifted simplicial complexes are {L}aplacian integral.
\newblock {\em Transactions of the American Mathematical Society},
  354(11):4313--4344, 2002.

\bibitem{F}
Sara Faridi.
\newblock Simplicial trees: Properties and applications.
\newblock In {\em the abstract of a talk at the International Conference on
  Commutative Algebra \& Combinatorics}, 2003.

\bibitem{tree2}
Eliseu Fritscher, Carlos Hoppen, Israel Rocha, and Vilmar Trevisan.
\newblock On the sum of the {L}aplacian eigenvalues of a tree.
\newblock {\em Linear Algebra and its Applications}, 435(2):371--399, 2011.

\bibitem{G2}
Hilal~A Ganie, Ahmad~M Alghamdi, and S~Pirzada.
\newblock On the sum of the {Laplacian} eigenvalues of a graph and brouwer's
  conjecture.
\newblock {\em Linear Algebra and its Applications}, 501:376--389, 2016.

\bibitem{GM}
Robert Grone and Russell Merris.
\newblock The {L}aplacian spectrum of a graph {II}.
\newblock {\em SIAM Journal on Discrete Mathematics}, 7(2):221--229, 1994.

\bibitem{GZW}
Mei Guan, Mingqing Zhai, and Yongfeng Wu.
\newblock On the sum of the two largest {L}aplacian eigenvalues of trees.
\newblock {\em Journal of Inequalities and Applications}, 2014(1):242, 2014.

\bibitem{GZ}
Ivan Gutman and Bo~Zhou.
\newblock {L}aplacian energy of a graph.
\newblock {\em Linear Algebra and its applications}, 414(1):29--37, 2006.

\bibitem{H}
WH~Haemers, Ali Mohammadian, and Behruz Tayfeh-Rezaie.
\newblock On the sum of {L}aplacian eigenvalues of graphs.
\newblock {\em Linear Algebra and its Applications}, 432(9):2214--2221, 2010.

\bibitem{hatcher}
Allen Hatcher.
\newblock {\em Algebraic topology}.
\newblock Cambridge University Press, 2002.

\bibitem{lapenergy}
Christoph Helmberg and Vilmar Trevisan.
\newblock Spectral threshold dominance, brouwer's conjecture and maximality of
  {Laplacian} energy.
\newblock {\em Linear Algebra and its Applications}, 512:18--31, 2017.

\bibitem{M}
Mayank.
\newblock {\em On variants of the Grone-Merris Conjecture}.
\newblock PhD thesis, Master’s thesis, Eindhoven University of Technology,
  2010.

\bibitem{Merris}
Russell Merris.
\newblock {L}aplacian graph eigenvectors.
\newblock {\em Linear algebra and its applications}, 278(1-3):221--236, 1998.

\bibitem{RV}
Israel Rocha and Vilmar Trevisan.
\newblock Bounding the sum of the largest {L}aplacian eigenvalues of graphs.
\newblock {\em Discrete Applied Mathematics}, 170:95--103, 2014.

\bibitem{S}
Daniel~A Spielman.
\newblock Spectral graph theory and its applications.
\newblock In {\em Foundations of Computer Science, 2007. FOCS'07. 48th Annual
  IEEE Symposium on}, pages 29--38. IEEE, 2007.

\bibitem{WHL}
Shouzhong Wang, Yufei Huang, and Bolian Liu.
\newblock On a conjecture for the sum of {L}aplacian eigenvalues.
\newblock {\em Mathematical and Computer Modelling}, 56(3-4):60--68, 2012.

\bibitem{YY}
Jieshan Yang and Lihua You.
\newblock On a conjecture for the signless {L}aplacian eigenvalues.
\newblock {\em Linear Algebra and its Applications}, 446:115--132, 2014.

\bibitem{Z}
Bo~Zhou.
\newblock On sum of powers of the {L}aplacian eigenvalues of graphs.
\newblock {\em Linear Algebra and its Applications}, 429(8-9):2239--2246, 2008.

\bibitem{Z2}
Bo~Zhou.
\newblock More on energy and {L}aplacian energy.
\newblock {\em MATCH Commun. Math. Comput. Chem}, 64(1):75--84, 2010.

\end{thebibliography}

\end{document}